\documentclass[review]{elsarticle}
\usepackage{lineno,hyperref}
\usepackage{array}
\usepackage[all]{xy}
\usepackage{color}
\usepackage{amsxtra, microtype}
\usepackage{amssymb,amsmath,amsthm,stmaryrd,latexsym,wasysym}
\usepackage{amstext}
\usepackage{dsfont}
\usepackage{graphicx}
\usepackage{leftidx}
\usepackage[version=3]{mhchem}

\newtheorem{theorem}{Theorem}[section]

\newtheorem{corollary}[theorem]{Corollary}

\newtheorem{definition}[theorem]{Definition}
\newtheorem{example}[theorem]{Example}

\newtheorem{lemma}[theorem]{Lemma}

\newtheorem{proposition}[theorem]{Proposition}
\newtheorem{remark}[theorem]{Remark}

\numberwithin{equation}{section}
\newcommand{\duer}{\mathbin{\raisebox{3pt}{\varhexstar}\kern-3.70pt{\rule{0.15pt}{4pt}}}\,}
\vfuzz12pt 
\hfuzz12pt 













\modulolinenumbers[5]

\journal{...}

\makeatletter
\def\ps@pprintTitle{%
   \let\@oddhead\@empty
   \let\@evenhead\@empty
   \def\@oddfoot{\reset@font\hfil\thepage\hfil}
   \let\@evenfoot\@oddfoot
}
\makeatother
\biboptions{sort&compress}







\bibliographystyle{elsarticle-num}

\begin{document}

\begin{frontmatter}

\title{$\mathds{Z}$-graded Hom-Lie Superalgebras}

\author{Mohammad Reza Farhangdoost\corref{mycorrespondingauthor}}
\cortext[mycorrespondingauthor]{Corresponding author:\quad farhang@shirazu.ac.ir}

\author{Ahmad Reza Attari Polsangi\corref{}}

\address{Department of Mathematics, College of Sciences,\\Shiraz University, P.O. Box 71457-
44776, Shiraz, Iran}

\begin{abstract}
 In this paper we introduce the notions of $\mathds{Z}$-graded hom-Lie superalgebras and we show that there is a maximal (resp., minimal) $\mathds{Z}$-graded hom-Lie superalgebra for a given local hom-Lie superalgebra. Morever, we introduce the invariant bilinear forms on a $\mathds{Z}$-graded hom-Lie superalgebra and we prove that a consistent supersymmetric $\alpha$-invariant form on the local part can be extended uniquely to a bilinear form with the same property on the whole $\mathds{Z}$-graded hom-Lie superalgebra. Furthermore, we check the condition in which the $\mathds{Z}$-graded hom-Lie superalgebra is simple.
\end{abstract}
\begin{keyword}
hom-Lie Superalgebra, $\mathds{Z}$-graded Lie superalgebra, $\mathds{Z}$-graded hom-Lie superalgebra\\
\MSC [2010] 17B65, 17B70, 17B99
\end{keyword}

\end{frontmatter}

\section{Introduction}
The notion of hom-Lie algebras was introduced by J. T. Hartwig, D. Larsson and S. Silvestrov in \cite{hartwig2006deformations} described the structures on certain deformations of Witt algebras and Virasoro algebras, which are widely utilized in the theoretical physics; such as string theory, vertex models in conformal field theory, quantum mechanics and quantum field theory \cite{aizawa1991q, chaichian1990quantum, chaichian1991q, daskaloyannis1992generalized, liu1992characterizations}. The hom-Lie algebras attract more and more attention. New structures on hom-Lie algebras which have great importance and utility, have been defined and discussed by mathematicians that we express them: The hom-Lie algebras were discussed by D. Larsson and S. Silvestrov in \cite{larsson2005quasi-hom, larsson2005quasi, larsson2007quasi}. The authors studied quadratic hom-Lie algebras in \cite{benayadi2014hom}; representation theory, cohomology and homology theory in \cite{ammar2010cohomology, sheng2012representations, yau2009hom}. In \cite{larsson2005quasi, larsson2005graded, larsson2009generalized, sigurdsson2006graded}, S. Silvestrov et al. introduced the general quasi-Lie algebras and including as special cases the color hom-Lie algebras \cite{armakan2017extensions, armakan2019extensions, armakan2019enveloping} and in particular hom-Lie superalgebras. Recently, different features of hom-Lie superalgebras has been studied by authors in \cite{ammar2013cohomology, ammar2010hom, armakan2017geometric, zhou2014general, makhlouf2010paradigm}. On the other hand, we have the graded structure on Lie algebras which were discussed by authors. The first basic example of graded Lie algebras was provided by Nijenhuis and then by Frolicher and Nijenhuis in \cite{frolicher1957theorem}. In \cite{kac1968simple}, Kac introduced some features of graded Lie algebras and then generalized it to Lie superalgebras in \cite{kac1977lie}. Now we want to construct a special graded structure ($\mathds{Z}$-graded), on hom-Lie superalgebras.
In the first section of this paper, hom-Lie algebras, hom-Lie superalgebras and some of their useful related definitions are presented. In section two, we present the notion of $\mathds{Z}$-graded hom-Lie superalgebras, local hom-Lie superalgebras and we prove that for a given multiplicative local hom-Lie superalgebra such that ${\alpha}^2 = \alpha$, there is a maximal and a minimal $\mathds{Z}$-graded hom-Lie superalgebra in which local parts are isomorphic to it. In the last section, we focus on invariant bilinear forms and we show that we can extended uniquely a consistent supersymmetric $\alpha$-invariant bilinear form on the local part to the whole $\mathds{Z}$-graded hom-Lie superalgebra with the same property.\\
Throughout this paper we fix a ground field $\mathds{K}$. All $\mathds{Z}_{2}$-graded vector spaces are considered over $\mathds{K}$ and linear maps are $\mathds{K}$-linear maps. Each element in the hom-Lie superalgebra is supposed to be homogeneous and degree of $x$ is denoted by $|x|$, for all $x \in \mathfrak{g}$.\\

We recall the definition of hom-Lie algebras and Lie superalgebras from \cite{sheng2012representations, kac1977lie}. Also, we present notions of a hom-Lie superalgebra as a generalization of a Lie superalgebra, \cite{makhlouf2010paradigm}.

\begin{definition}\cite{sheng2012representations}
	A hom-Lie algebra is a triple $(\mathfrak{g},[.,.],\alpha)$, where $\mathfrak{g}$ is a vector space equipped with a skew-symmetric bilinear map
	$[.,.] : \mathfrak{g} \times  \mathfrak{g} \to$  $\mathfrak{g}$
	and a linear map
	$\alpha : \mathfrak{g} \to \mathfrak{g}$
	such that;
	\begin{equation*}
		[\alpha (x), [y, z]] + [\alpha (y), [z, x]] + [\alpha (z), [x, y]] = 0,
	\end{equation*}
	for all $x$, $y$, $z$ in $\mathfrak{g}$.
\begin{itemize}
	\item 	A hom-Lie algebra is called a multiplicative hom-Lie algebra, if $\alpha$ is an algebraic morphism, i.e. for any $x, y \in \mathfrak{g}$,
	\begin{equation*}
		\alpha([x,y])=[\alpha(x),\alpha(y)].
	\end{equation*}
	\item 	We call a hom-Lie algebra regular, if $\alpha$ is an automorphism.
\end{itemize}
\end{definition}

\begin{definition} \cite{kac1977lie}
	A Lie superalgebra is a $\mathds{Z}_{2}$-graded vector space $\mathfrak{g} = \mathfrak{g}_{\bar{0}} \oplus \mathfrak{g}_{\bar{1}}$, together with a graded Lie bracket $[.,.] : \mathfrak{g} \times \mathfrak{g} \to \mathfrak{g}$ of degree zero, i.e. $[.,.]$ is a bilinear map with
	\begin{equation*}
		[\mathfrak{g}_{i}, \mathfrak{g}_{j}] \subset \mathfrak{g}_{{i+j}(mod2)},
	\end{equation*}
	such that for homogeneous elements $x,y,z \in \mathfrak{g}$, the following identities hold:
	\begin{itemize}
		\item $[x,y]=-(-1)^{|x||y|}[y,x]$,
		\item $[x,[y,z]]=[[x,y],z]+(-1)^{|x||y|}[y,[x,z]]$.
	\end{itemize}
\end{definition}

Now analogous to the definition of hom-Lie algebras, a hom-Lie superalgebra is defined in a way that makes it a generalization of a Lie superalgebra.

\begin{definition}\label{HLS} \cite{makhlouf2010paradigm}
A hom-Lie superalgebra is a triple $(\mathfrak{g},[.,.],\alpha)$ consisting of a $\mathds{Z}_{2}$-graded vector space $\mathfrak{g} = \mathfrak{g}_{\bar{0}} \oplus \mathfrak{g}_{\bar{1}}$, an even linear map (bracket) $[.,.] : \mathfrak{g} \times \mathfrak{g} \to \mathfrak{g}$ and an even homomorphism $\alpha : \mathfrak{g} \to \mathfrak{g}$ satisfying the following supersymmetry and hom-Jacobi identity, i.e.
\begin{itemize}
	\item  $[x,y]=-(-1)^{|x||y|}[y,x]$,
	\item $(-1)^{|x||z|}[\alpha(x),[y,z]]+(-1)^{|y||x|}[\alpha(y),[z,x]]+(-1)^{|z||y|}[\alpha(z),[x,y]]=0$,
where $x$, $y$ and $z$ are homogeneous elements in $\mathfrak{g}$.
\end{itemize}
\begin{itemize}
	\item A hom-Lie superalgebra is called multiplicative hom-Lie superalgebra, if $\alpha$ is an algebraic morphism, i.e. for any $x, y \in \mathfrak{g}$ we have
	\begin{equation*}
		\alpha([x,y])=[\alpha(x),\alpha(y)].
	\end{equation*}
	\item A hom-Lie superalgebra is called regular hom-Lie superalgebra, if $\alpha$ is an algebraic automorphism.
\end{itemize}
\end{definition}

\begin{remark} \cite{ammar2010hom}
    Setting $\alpha = id$ in Definition \ref{HLS} we obtain the definition of a Lie superalgebra. Hence hom-Lie superalgebras include Lie superalgebras as a subcategory, thereby motivating the name ”hom-Lie superalgebras” as a deformation of Lie superalgebras by an endomorphism.
\end{remark}

\begin{example} \cite{ammar2010hom}
	(Affine hom-Lie superalgebra). Let $V=V_0 \oplus V_1$ be a 3-dimensional superspace where $V_0$ is generated by $e_1 , e_2$ and $V_1$ is generated by $e_3$. The triple $(V,[.,.],\alpha)$ is a hom-Lie superalgebra defined by $[e_1,e_2]=e_1$, $[e_1,e_3]=[e_2,e_3]=[e_3,3_3]=0$ and $\alpha$ is any homomorphism.
\end{example}

Let $(\mathfrak{g},[.,.],\alpha)$ and $(\mathfrak{g}',[.,.]',\beta)$ be two hom-Lie superalgebras. An even homomorphism $ \phi : \mathfrak{g} \to \mathfrak{g}'$ is said to be a homomorphism of hom-Lie superalgebras, if
	
	$ \phi [u,v]=[\phi (u), \phi (v))]'$,
	
	$\phi o \alpha = \beta o \phi$.

The hom-Lie superalgebras $(\mathfrak{g},[.,.],\alpha)$ and $(\mathfrak{g}',[.,.]',\beta)$ are isomorphic, if there is a hom-Lie superalgebra homomorphism $ \phi : \mathfrak{g} \to \mathfrak{g}'$ such that $\phi$ be bijective \cite{makhlouf2010paradigm}.

A sub-vector space $I \subseteq \mathfrak{g}$ is a hom-subalgebra of $(\mathfrak{g},[.,.],\alpha)$, if $\alpha(I) \subseteq I$ and I is closed under the bracket operation $[.,.]$, i.e. $[I,I] \subseteq I$. Also, hom-subalgebra I is called a hom-ideal of $\mathfrak{g}$, if $[I,\mathfrak{g}] \subseteq I$. Moreover, if $[I,I] = 0$, then I is Abelian \cite{guan2019on}.

We can now make some new hom-ideal that is stated in the following lemma.

\begin{lemma}
	Let $I_{1}, I_{2}$ be hom-ideal in $\mathfrak{g}$. Define
	
	$ I_{1} + I_{2} = \{x+y | x \in I_{1} , y \in I_{2}\}$,

	$ [I_{1},I_{2}]=$
	Subspace spanned by
	$[x,y], x \in I_{1}, y \in I_{2}$.\\ Then $ I_{1} \cap I_{2}, I_{1}+I_{2}, [I_{1}, I_{2}]$ are hom-ideal in $\mathfrak{g}$.
\end{lemma}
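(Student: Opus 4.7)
The plan is to verify the two defining properties of a hom-ideal --- $\alpha$-invariance and absorption of brackets with $\mathfrak{g}$ --- for each of the three candidate subspaces $I_1 \cap I_2$, $I_1 + I_2$, $[I_1,I_2]$. The first two verifications reduce to linearity, while $[I_1,I_2]$ is the substantive case and will rely on the hom-Jacobi identity together with multiplicativity/regularity of $\alpha$ (which the paper works in throughout).

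For $I_1\cap I_2$ I would take $x\in I_1\cap I_2$ and $g\in\mathfrak{g}$ and observe that $\alpha(x)$ and $[x,g]$ each lie in both $I_1$ and $I_2$ separately, hence in the intersection. For $I_1+I_2$ the identities $\alpha(x_1+x_2)=\alpha(x_1)+\alpha(x_2)$ and $[x_1+x_2,g]=[x_1,g]+[x_2,g]$ put both images back into $I_1+I_2$. Both cases require nothing beyond linearity of $\alpha$ and bilinearity of the bracket.

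The real work is $[I_1,I_2]$. On a generator $[x,y]$ with $x\in I_1$, $y\in I_2$, I would use multiplicativity of $\alpha$ to write $\alpha([x,y])=[\alpha(x),\alpha(y)]\in[I_1,I_2]$, and then extend linearly to obtain $\alpha([I_1,I_2])\subseteq[I_1,I_2]$. For the absorption property, take $z\in\mathfrak{g}$ and apply the hom-Jacobi identity to $(x,y,z)$,
\[
(-1)^{|x||z|}[\alpha(x),[y,z]]+(-1)^{|y||x|}[\alpha(y),[z,x]]+(-1)^{|z||y|}[\alpha(z),[x,y]]=0.
\]
Since $\alpha(x)\in I_1$ and $[y,z]\in I_2$ (as $I_2$ is a hom-ideal), the first summand lies in $[I_1,I_2]$; using supersymmetry, $[z,x]\in I_1$ and $\alpha(y)\in I_2$, so the second summand lies in $[I_2,I_1]=[I_1,I_2]$. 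The identity then expresses $[\alpha(z),[x,y]]$ as a signed sum of elements of $[I_1,I_2]$.

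The main obstacle --- and the only point where the hom-Lie situation departs from the classical one --- is that hom-Jacobi delivers $[\alpha(z),[x,y]]$ rather than the bracket $[z,[x,y]]$ we actually need. To close the gap I would invoke regularity of $\alpha$: every $z\in\mathfrak{g}$ has a preimage $z'=\alpha^{-1}(z)$, and re-running the argument at $(x,y,z')$ yields $[z,[x,y]]=[\alpha(z'),[x,y]]\in[I_1,I_2]$. A final $\mathds{K}$-linear extension from the generators $[x,y]$ to arbitrary elements of $[I_1,I_2]$ then completes the absorption property, finishing the proof.
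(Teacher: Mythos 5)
Your proposal follows essentially the same route as the paper: the intersection and sum are dispatched by linearity, and for $[I_1,I_2]$ the paper likewise works on a single generator $c[x,y]$, rearranges the hom-Jacobi identity to express $[\alpha(t),[x,y]]$ as a signed combination of $[\alpha(x),[t,y]]\in[I_1,I_2]$ and $[[t,x],\alpha(y)]\in[I_1,I_2]$, and uses multiplicativity of $\alpha$ to get $\alpha([x,y])=[\alpha(x),\alpha(y)]$. The one place you genuinely diverge is the step you flag as ``the main obstacle'': the paper stops after showing $[\alpha(t),x]\in[I_1,I_2]$ for all $t\in\mathfrak{g}$, which only establishes $[\alpha(\mathfrak{g}),[I_1,I_2]]\subseteq[I_1,I_2]$ and hence the hom-ideal property as literally defined ($[[I_1,I_2],\mathfrak{g}]\subseteq[I_1,I_2]$) only when $\alpha$ is surjective. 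You close this gap explicitly by passing to $z'=\alpha^{-1}(z)$ under regularity; the paper leaves it implicit. The trade-off is that your argument is complete but imports hypotheses (regularity, and multiplicativity for the $\alpha$-closure step) that the lemma as stated does not announce --- though the paper's own proof silently needs the same multiplicativity and at least surjectivity, so your version is the more honest accounting of what is actually required.
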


\begin{proof}
	The proof that $I_{1} \cap I_{2}$ and $I_{1}+I_{2}$ are hom-ideal is straightforward. So we just prove that $[I_{1}, I_{2}]$ is hom-ideal. Let $t \in \mathfrak{g}$ and $x \in [I,J]$. Thus $x$ is written as the form $$x=c_1[x_1,y_1]+...+c_n[x_n,y_n],$$ where $c_i \in \mathds{K}$, $x_i \in I$ and $y_i \in J$, for $i = 1, 2, ..., n$.\\ Without loss of generality, assumes that $x=c[x,y]$. So
		\begin{align*}
			[\alpha(t),x] &= [\alpha(t), c[x,y]]\\ &= c[\alpha(t), [x,y]] \\ &=c((-1)^{|x||t|}[\alpha(x), [t,y]] + (-1)^{|y||x|}[[t,x],\alpha(y)]) \in I+J.
		\end{align*}
		Finally
		\begin{align*}
		\alpha(x) = \alpha(c[x,y]) = c(\alpha[x,y]) = c([\alpha(x), \alpha(y)]) \in I+J.
		\end{align*}
\end{proof}

\begin{definition} \cite{liu2013hom}
	The center of a hom-Lie superalgebra $\mathfrak{g}$, denoted by $Z(\mathfrak{g})$, is the set of elements $x \in \mathfrak{g}$ satisfying $[x,\mathfrak{g}]=0$.
\end{definition}

\begin{remark}
When $\alpha : \mathfrak{g} \to \mathfrak{g}$ is a surjective endomorphism, then one can easily check that $(Z(\mathfrak{g}), \alpha)$
is an Abelian hom-Lie superalgebra and an hom-ideal of $(\mathfrak{g},[.,.],\alpha)$.
\end{remark}

\begin{lemma}
	The quotient $\mathfrak{g}/[\mathfrak{g},\mathfrak{g}]$ is an Abelian hom-Lie superalgebra. Moreover, $[\mathfrak{g},\mathfrak{g}]$ is the smallest hom-ideal with this property: if $\mathfrak{g}/I$ is Abelian for some hom-ideal $ I \subset \mathfrak{g}$, then $[\mathfrak{g},\mathfrak{g}] \subset I$.
\end{lemma}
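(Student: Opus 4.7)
The plan is to split the statement into three pieces: (i) $[\mathfrak{g},\mathfrak{g}]$ is a hom-ideal so the quotient makes sense; (ii) the induced structure on $\mathfrak{g}/[\mathfrak{g},\mathfrak{g}]$ is Abelian; and (iii) any hom-ideal $I$ with Abelian quotient must contain $[\mathfrak{g},\mathfrak{g}]$.

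For step (i), I would simply invoke the previous lemma with $I_1 = I_2 = \mathfrak{g}$, which gives that $[\mathfrak{g},\mathfrak{g}]$ is a hom-ideal, hence in particular $\alpha\bigl([\mathfrak{g},\mathfrak{g}]\bigr) \subseteq [\mathfrak{g},\mathfrak{g}]$ and $[\mathfrak{g},[\mathfrak{g},\mathfrak{g}]] \subseteq [\mathfrak{g},\mathfrak{g}]$. Write $\bar x$ for the class of $x$ in $\mathfrak{g}/[\mathfrak{g},\mathfrak{g}]$. Because $\alpha$ preserves $[\mathfrak{g},\mathfrak{g}]$ and is even, the formula $\bar\alpha(\bar x) := \overline{\alpha(x)}$ gives a well-defined even linear map on the quotient, and the bracket $[\bar x,\bar y] := \overline{[x,y]}$ is also well-defined since both slots are hom-ideals. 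The supersymmetry and hom-Jacobi identity on $\mathfrak{g}$ descend termwise to the quotient, so $(\mathfrak{g}/[\mathfrak{g},\mathfrak{g}],[\cdot,\cdot],\bar\alpha)$ is indeed a hom-Lie superalgebra.

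For step (ii), note that for every pair of homogeneous $x,y\in\mathfrak{g}$ one has $[x,y] \in [\mathfrak{g},\mathfrak{g}]$, hence $[\bar x,\bar y] = \overline{[x,y]} = 0$ in the quotient. Since brackets are bilinear and every element of $\mathfrak{g}/[\mathfrak{g},\mathfrak{g}]$ is a sum of such classes, the induced bracket is identically zero, which is exactly the Abelian condition $[\mathfrak{g}/[\mathfrak{g},\mathfrak{g}], \mathfrak{g}/[\mathfrak{g},\mathfrak{g}]] = 0$.

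For step (iii), let $I \subset \mathfrak{g}$ be a hom-ideal such that $\mathfrak{g}/I$ is Abelian. Then for all homogeneous $x,y \in \mathfrak{g}$ the class $\overline{[x,y]} = [\bar x, \bar y] = 0$ in $\mathfrak{g}/I$, i.e.\ $[x,y] \in I$. Since $[\mathfrak{g},\mathfrak{g}]$ is by definition the subspace spanned by all such brackets, we conclude $[\mathfrak{g},\mathfrak{g}] \subseteq I$, proving minimality. I do not anticipate any serious obstacle here; the only care-requiring point is to use the previous lemma to know that $[\mathfrak{g},\mathfrak{g}]$ is $\alpha$-invariant so that $\bar\alpha$ is well-defined, and to verify that the bracket descends unambiguously, both of which are routine once $[\mathfrak{g},\mathfrak{g}]$ is a hom-ideal.
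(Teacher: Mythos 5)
Your proposal is correct and follows essentially the same route as the paper: define the induced bracket $[\bar x,\bar y]=\overline{[x,y]}$, observe it vanishes on the quotient by $[\mathfrak{g},\mathfrak{g}]$, and for minimality note that Abelianness of $\mathfrak{g}/I$ forces $[x,y]\in I$. You are somewhat more careful than the paper in explicitly invoking the earlier lemma (with $I_1=I_2=\mathfrak{g}$) to justify that $[\mathfrak{g},\mathfrak{g}]$ is a hom-ideal and that $\bar\alpha$ and the bracket are well-defined on the quotient, which the paper leaves implicit.
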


\begin{proof}
    For $x \in \mathfrak{g}$, $\bar{x}$ denotes its image in $\mathfrak{g}/I$ and the Lie bracket on $\mathfrak{g}/I$ is defined by setting
    $[\bar{x},\bar{y}]= \overline{[x,y]}$. Let $\bar{x}, \bar{y} \in \mathfrak{g}/[\mathfrak{g},\mathfrak{g}]$, then
    \begin{equation*}
      [\bar{x},\bar{y}]= \overline{[x,y]}=0 \in \mathfrak{g}/[\mathfrak{g},\mathfrak{g}].
    \end{equation*}
    So $\mathfrak{g}/[\mathfrak{g},\mathfrak{g}]$ is an Abelian hom-Lie superalgebra. Moreover, if $\mathfrak{g}/I$ is Abelian, $\bar{x}$ and $\bar{y}$ commute, then
    \begin{equation*}
      \overline{[x,y]} = [\bar{x},\bar{y}]=0 \in \mathfrak{g}/I.
    \end{equation*}
    This implies that $[x,y] \in I$ and therefore we have $[\mathfrak{g},\mathfrak{g}] \subset I$.
\end{proof}

We are going to need the following definition throughout the rest of the paper.

\begin{definition} \cite{ammar2013cohomology}
	A representation of the hom-Lie superalgebra $(\mathfrak{g},[.,.],\alpha)$ on a $\mathds{Z}_{2}$-graded vector space $V = V_{\bar{0}} \oplus V_{\bar{1}} $ with respect to $\beta \in gl(V)_{\bar{0}}$ is an even linear map $\rho : \mathfrak{g} \to gl(V)$, such that for all $x,y \in \mathfrak{g}$, the following equalities are satisfied:
	\begin{equation*}
	\rho (\alpha(x)) o \beta = \beta o \rho(x);
	\end{equation*}
	\begin{equation*}
	\rho ([x,y]) o \beta = \rho (\alpha(x)) o \rho(y) -(-1)^{|x||y|} \rho (\alpha(y)) o \rho(x).
	\end{equation*}

A representation $V$ of $\mathfrak{g}$ is called irreducible or simple, if it has no nontrivial subrepresentations. Otherwise $V$ is called reducible.
\end{definition}

Let $(\mathfrak{g},[.,.],\alpha)$ be a multiplicative hom-Lie superalgebra. We consider $\mathfrak{g}$ as a representation on itself via the bracket and with respect to the morphism $\alpha$.

\begin{example} \cite{ammar2013cohomology}
The ${\alpha}^s$-adjoint representation of the hom-Lie superalgebra $(\mathfrak{g},[.,.],\alpha)$, which we denote by $ad_s$, is defined by
\begin{equation*}
  ad_s(a)(x)= [{\alpha}^s(a),x], \quad \text{for all} \quad a, x \in \mathfrak{g}.
\end{equation*}
\end{example}

\begin{lemma} \cite{ammar2013cohomology}
With the above notation, we have that $(\mathfrak{g},ad_s(.)(.), \alpha)$ is a representation of the hom-Lie superalgebra $\mathfrak{g}$.
\end{lemma}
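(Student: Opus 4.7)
The goal is to verify the two axioms in the definition of a representation (the displayed identities defining $\rho$ and $\beta$ compatibility) for the case $V=\mathfrak{g}$, $\rho=ad_s$, and $\beta=\alpha$. So the plan is to check, for all homogeneous $x,y,z\in\mathfrak{g}$, that
\begin{itemize}
\item[(i)] $ad_s(\alpha(x))\circ\alpha = \alpha\circ ad_s(x)$;
\item[(ii)] $ad_s([x,y])\circ\alpha = ad_s(\alpha(x))\circ ad_s(y) - (-1)^{|x||y|}ad_s(\alpha(y))\circ ad_s(x)$.
\end{itemize}

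For (i), I would just evaluate both sides on an arbitrary $z\in\mathfrak{g}$ and exploit multiplicativity of $\alpha$. On the left we get $[\alpha^{s+1}(x),\alpha(z)]$ and on the right $\alpha([\alpha^s(x),z])=[\alpha^{s+1}(x),\alpha(z)]$, so they agree. This step is purely bookkeeping and essentially free.

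For (ii), I would evaluate both sides on $z$, use multiplicativity to rewrite $\alpha^s([x,y])=[\alpha^s(x),\alpha^s(y)]$, so the left-hand side becomes $[[\alpha^s(x),\alpha^s(y)],\alpha(z)]$, while the right-hand side is
\begin{equation*}
[\alpha^{s+1}(x),[\alpha^s(y),z]] - (-1)^{|x||y|}[\alpha^{s+1}(y),[\alpha^s(x),z]].
\end{equation*}
Thus (ii) reduces to an identity among triple brackets, which I would derive from the hom-Jacobi identity applied to the triple $(\alpha^s(x),\alpha^s(y),z)$. Multiplying the resulting three-term relation by $(-1)^{|x||z|}$ and then converting the bracket $[z,\alpha^s(x)]$ to $-(-1)^{|x||z|}[\alpha^s(x),z]$ via supersymmetry, and similarly flipping $[\alpha(z),[\alpha^s(x),\alpha^s(y)]]$, yields precisely the desired identity.

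The main obstacle is the careful sign management in step (ii); one has to be vigilant that $|\alpha^s(x)|=|x|$ (since $\alpha$ is even) and that the exponents $|x||z|$, $|y||z|$, $|x||y|$ combine correctly to leave only the $(-1)^{|x||y|}$ on the right-hand side after the two supersymmetry swaps. Once the signs are tracked, the verification is mechanical; no deeper structural input beyond multiplicativity of $\alpha$, supersymmetry, and hom-Jacobi is needed.
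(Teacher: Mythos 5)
Your verification is correct: after using multiplicativity of $\alpha$ and the fact that $\alpha$ is even (so $|\alpha^s(x)|=|x|$), both axioms reduce to the identity $[[\alpha^s(x),\alpha^s(y)],\alpha(z)]=[\alpha^{s+1}(x),[\alpha^s(y),z]]-(-1)^{|x||y|}[\alpha^{s+1}(y),[\alpha^s(x),z]]$, which follows from the hom-Jacobi identity applied to the triple $(\alpha^s(x),\alpha^s(y),z)$ together with the two supersymmetry flips you indicate, and the signs do combine to leave exactly the single factor $(-1)^{|x||y|}$. The paper states this lemma without proof, citing the reference, so there is no in-paper argument to compare against; your direct computation is the standard one and is sound.
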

\section{$\mathds{Z}$-graded and local hom-Lie superalgebras}

In \cite{kac1977lie}, Kac introduced the notion of a $\mathds{Z}$-graded Lie superalgebra and in this section we introduce concept of a $\mathds{Z}$-graded hom-Lie superalgebra and state some results about it.

\begin{definition}
Let $\mathfrak{g}$ be a hom-Lie superalgebra. It is called a $\mathds{Z}$-graded hom-Lie superalgebra if it is decomposision of itself into a direct sum of finite-demensional $\mathds{Z}_2$-graded subspaces $\mathfrak{g}= \bigoplus_{i \in \mathds{Z}} \mathfrak{g}_i$, for which $[\mathfrak{g}_i , \mathfrak{g}_j] \subseteq \mathfrak{g}_{i+j}$.\\

A $\mathds{Z}$-graded hom-Lie superalgebra is said to be consistent, if
$\mathfrak{g}_{\bar{0}} = \bigoplus_{i \in \mathds{Z}} \mathfrak{g}_{2i}$ and $\mathfrak{g}_{\bar{1}}= \bigoplus_{i \in \mathds{Z}} \mathfrak{g}_{2{i+1}}$.
\end{definition}

\begin{remark}
	One can see that if $\mathfrak{g}$ is a $\mathds{Z}$-graded hom-Lie superalgebra, then $\mathfrak{g}_0$ is a hom-Lie subalgebra and $[\mathfrak{g}_{0} , \mathfrak{g}_j] \subseteq \mathfrak{g}_j$; therefore the restriction of the adjoint representation to $\mathfrak{g}_{0}$ induces linear representation of it on the subspaces $\mathfrak{g}_j$.
\end{remark}

\begin{definition}
	We have some basic definitions as follows:
	\begin{itemize}
		\item A $\mathds{Z}$-graded hom-Lie superalgebra $\mathfrak{g}= \bigoplus_{i \in \mathds{Z}} \mathfrak{g}_i$ is called irreducible, if the representation of $\mathfrak{g}_0$ on $\mathfrak{g}_{-1}$ is irreducible.
		\item A $\mathds{Z}$-graded hom-Lie superalgebra $\mathfrak{g}= \bigoplus_{i \in \mathds{Z}} \mathfrak{g}_i$ is called transitive, if for $a \in \mathfrak{g}_i$, $i \ge 0$, it follows from $[a,\mathfrak{g}_{-1}]=0$ that $a=0$.
		\item A $\mathds{Z}$-graded hom-Lie superalgebra $\mathfrak{g}= \bigoplus_{i \in \mathds{Z}} \mathfrak{g}_i$ is called bitransitive, if in addition for $a \in \mathfrak{g}_i$, $i \le 0$, it follows from $[a,\mathfrak{g}_{1}]=0$ that $a=0$.
	\end{itemize}
\end{definition}

\begin{definition} \label{simple}
	A $\mathds{Z}$-graded hom-Lie superalgebra $\mathfrak{g}$ is called simple, if it does not have any nontrivial graded hom-ideal and $[\mathfrak{g},\mathfrak{g}] \ne \{0\}$.
\end{definition}

\begin{remark}
	From Definition \ref{simple}, one can consider the followings:
	\begin{itemize}
		\item A left or right graded hom-ideal of $\mathfrak{g}$ is automatically a two sided hom-ideal.
		\item The condition $[\mathfrak{g},\mathfrak{g}] \ne \{0\}$ serves to eliminate the zero-dimensional and two one-dimensional hom-Lie superalgebras. It follows that $[\mathfrak{g},\mathfrak{g}] = \mathfrak{g}$.
	\end{itemize}
\end{remark}

These properties are closely connected with $\mathfrak{g}$ being simple, as is shown in the following proposition.

\begin{proposition}\label{prop1}
	Let $\mathfrak{g}= \bigoplus_{i \in \mathds{Z}} \mathfrak{g}_i$ be a simple $\mathds{Z}$-graded hom-Lie superalgebra. If the subspace $\mathfrak{g}_{-1} \oplus \mathfrak{g}_0 \oplus \mathfrak{g}_1$ generates $\mathfrak{g}$, then it is bitransitive.
\end{proposition}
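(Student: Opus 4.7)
The plan is to establish bitransitivity by proving each of its two defining conditions separately, noting that they are mirror images under the involution $\mathfrak{g}_i\leftrightarrow\mathfrak{g}_{-i}$, which preserves both the simplicity hypothesis and the hypothesis that $\mathfrak{g}_{-1}\oplus\mathfrak{g}_0\oplus\mathfrak{g}_1$ generates $\mathfrak{g}$. It therefore suffices to prove transitivity: for any homogeneous $a\in\mathfrak{g}_i$ with $i\ge 0$ satisfying $[a,\mathfrak{g}_{-1}]=0$, one has $a=0$.

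I would proceed by contradiction. Assume such a nonzero $a$ exists and let $J$ be the graded hom-ideal of $\mathfrak{g}$ it generates. The target is to show $J\subseteq\bigoplus_{j\ge 0}\mathfrak{g}_j$, for then simplicity forces either $J=0$ (contradicting $a\in J\setminus\{0\}$) or $J=\mathfrak{g}$; the latter case forces $\mathfrak{g}_{-1}=0$, and a short case analysis using that $\bigoplus_{j\ge 1}\mathfrak{g}_j$ is itself a graded hom-ideal (combined with the remark $[\mathfrak{g},\mathfrak{g}]=\mathfrak{g}$ following the definition of simplicity) rules this out as well.

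The central technical step is an induction on $k\ge 1$ showing the stronger statement $[\alpha^m(a),\mathfrak{g}_{-k}]=0$ for every $m\ge 0$. The base $k=1$ follows from the hypothesis together with the multiplicativity of $\alpha$: $\alpha^m([a,x])=[\alpha^m(a),\alpha^m(x)]=0$ for $x\in\mathfrak{g}_{-1}$, and grading-compatibility of $\alpha$ then gives $[\alpha^m(a),\mathfrak{g}_{-1}]=0$. For the inductive step, the generation hypothesis allows one to express each element of $\mathfrak{g}_{-k}$ as a linear combination of brackets $[u,v]$ with $u\in\mathfrak{g}_{-1}\oplus\mathfrak{g}_0\oplus\mathfrak{g}_1$ and $v$ an iterated bracket of strictly lower complexity; iterated application of the hom-Jacobi identity rewrites each such $[u,v]$ as a sum of terms in which $a$ is ultimately paired against elements of $\mathfrak{g}_{-k'}$ with $k'<k$, to which the inductive hypothesis applies, the extra $\alpha$-factors introduced by each Jacobi step being absorbed by the stronger form $[\alpha^m(a),\mathfrak{g}_{-k'}]=0$.

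Once this induction is in place, every element of $J$, which is built from iterated brackets of $a$ with homogeneous elements of $\mathfrak{g}$ and from the iterates $\alpha^m(a)$, lies in $\bigoplus_{j\ge 0}\mathfrak{g}_j$: brackets with strictly negative-degree elements vanish by the induction, and all other brackets preserve non-negativity of degree. The main obstacle is precisely this inductive step, since controlling $\alpha$ through the hom-Jacobi identity is delicate in the hom-Lie setting; this is why the stronger annihilation condition involving all $\alpha^m(a)$, together with the multiplicativity and grading-compatibility of $\alpha$, are essential for the induction to close.
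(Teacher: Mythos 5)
Your overall strategy is the one the paper uses: show that a homogeneous $a\in\mathfrak{g}_i$, $i\ge 0$, with $[a,\mathfrak{g}_{-1}]=0$ generates a graded hom-ideal contained in $\bigoplus_{j\ge 0}\mathfrak{g}_j$, and let simplicity finish the argument (the paper exhibits this ideal explicitly as $\bigoplus_{k,l}(ad_s(\mathfrak{g}_1))^k(ad_s(\mathfrak{g}_0))^l(a)$ and declares the verification obvious; your handling of the degenerate alternative $J=\mathfrak{g}$ is in fact more careful than the paper's, which tacitly assumes $\mathfrak{g}_{-1}\ne 0$). The reduction of bitransitivity to transitivity via the grading reversal $\mathfrak{g}_i\leftrightarrow\mathfrak{g}_{-i}$ is also fine.

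There is, however, a genuine gap in your central induction, and it sits exactly where you flag the difficulty. Your base case asserts $[\alpha^m(a),\mathfrak{g}_{-1}]=0$ for all $m\ge 0$, but multiplicativity only gives $[\alpha^m(a),\alpha^m(x)]=\alpha^m([a,x])=0$, i.e.\ $\alpha^m(a)$ annihilates the subspace $\alpha^m(\mathfrak{g}_{-1})\subseteq\mathfrak{g}_{-1}$; grading-compatibility of $\alpha$ does not upgrade this to all of $\mathfrak{g}_{-1}$ unless $\alpha$ is surjective in degree $-1$. The full statement is precisely what your inductive step consumes: writing an element of $\mathfrak{g}_{-k}$ as a combination of $[u,v]$ with $u\in\mathfrak{g}_{-1}$ \emph{arbitrary}, the hom-Jacobi identity applied to $[\alpha^m(a),[u,v]]$ produces a term $[\alpha(v),[\alpha^{m-1}(a),u]]$, which you can only discard if $\alpha^{m-1}(a)$ kills every $u\in\mathfrak{g}_{-1}$, not just those in the image of $\alpha^{m-1}$. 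The case $m=0$ is worse still: hom-Jacobi only ever produces outer brackets of the form $[\alpha(\cdot),\cdot]$, so $[a,[u,v]]$ is not reachable at all unless $a\in\operatorname{Im}\alpha$. As written the induction therefore does not close; it would close if $\alpha$ were assumed bijective (regular), or if you rearranged the bookkeeping as the paper implicitly does, so that the $\alpha$-twists accumulate on the elements of $\mathfrak{g}_{0}$ and $\mathfrak{g}_{\pm 1}$ being bracketed against $a$ rather than on $a$ itself --- the terminal terms are then of the form $[\alpha^m(y),a]$ with $y\in\mathfrak{g}_{-1}$, and these vanish because $\alpha^m(y)$ still lies in $\mathfrak{g}_{-1}$.
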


\begin{proof}
	If, we consider $[x,\mathfrak{g}_{-1}]=0$, $x \in \mathfrak{g}_k$, $k \ge 0$; then the space\\
\begin{equation*}
  \bigoplus_{k,l=0}^{\infty} (ad_{s}(\mathfrak{g}_1))^k (ad_{s}(\mathfrak{g}_0))^l (x)
\end{equation*}
    is obviously a nontrivial homogeneous hom-ideal of the hom-Lie superalgebrag. Hence $x=0$.
\end{proof}
Now, we consider an important notion of hom-Lie superalgebra.

\begin{definition}
	Let $\hat{\mathfrak{g}}$ be a $\mathds{Z}_2$-graded space, decomposed into a direct sum of $\mathds{Z}_2$-graded subspaces, $\hat{\mathfrak{g}} = \mathfrak{g}_{-1} \oplus \mathfrak{g}_0 \oplus \mathfrak{g}_1$. Suppose that whenever $|i+j| \le 1$, a bilinear operation is defined $\mathfrak{g}_i \times \mathfrak{g}_j \to \mathfrak{g}_{i+j}$, such that $((x,y) \mapsto [x,y])$, satisfying the axiom of anticommutativity and the hom-Jacobi identity for hom-Lie superalgebras, each time the three terms of the identity are defined. Then $\hat{\mathfrak{g}}$ is called a local hom-Lie superalgebra.
\end{definition}
From the above definition we have the following remark.
\begin{remark}
  For a $\mathds{Z}$-graded hom-Lie superalgebra $\mathfrak{g}= \bigoplus \mathfrak{g}_i$, there corresponds a local hom-Lie superalgebra $\mathfrak{g}_{-1} \oplus \mathfrak{g}_0 \oplus \mathfrak{g}_1$, which we call the local part of $\mathfrak{g}$.
\end{remark}

Homomorphisms, transivity and bitransivity, for a local hom-Lie superalgebra are defined as for a $\mathds{Z}$-graded hom-Lie superalgebra.
In this section we consider only $\mathds{Z}$-graded hom-Lie superalgebras $\mathfrak{g}= \bigoplus \mathfrak{g}_i$ in which the subspace $\mathfrak{g}_{-1} \oplus \mathfrak{g}_0 \oplus \mathfrak{g}_1$ generates $\mathfrak{g}$.\\

\begin{definition}
	Let $\hat{\mathfrak{g}} = \mathfrak{g}_{-1} \oplus \mathfrak{g}_0 \oplus \mathfrak{g}_1$ be a local hom-Lie superalgebra.
	\begin{itemize}
		\item 	A $\mathds{Z}$-graded hom-Lie superalgebra $\mathfrak{g}= \bigoplus \mathfrak{g}_i$ with local part $\hat{\mathfrak{g}}$ is said to be maximal ($\mathfrak{g}_{max}(\hat{\mathfrak{g}})$) if for any other $\mathds{Z}$-graded hom-Lie superalgebra $\mathfrak{g}'$, an isomorphism of the local part $\hat{ \mathfrak{g}}$ and $\hat{\mathfrak{g}}'$ extends to an epimorphism of $\mathfrak{g}$ onto $\mathfrak{g}'$.
		\item 	A $\mathds{Z}$-graded hom-Lie superalgebra $\mathfrak{g}= \bigoplus \mathfrak{g}_i$ with local part $\hat{\mathfrak{g}}$ is said to be minimal ($\mathfrak{g}_{min}(\hat{\mathfrak{g}})$) if for any other $\mathds{Z}$-graded hom-Lie superalgebra $\mathfrak{g}'$, an isomorphism of the local part $\hat{ \mathfrak{g}}$ and $\hat{\mathfrak{g}}'$ extends to an epimorphism of $\mathfrak{g}'$ onto $\mathfrak{g}$.
	\end{itemize}
\end{definition}

\begin{theorem}\label{Exist}
	Let $\hat{\mathfrak{g}} = \mathfrak{g}_{-1} \oplus \mathfrak{g}_0 \oplus \mathfrak{g}_1$ be a multiplicative local hom-Lie superalgebra such that $\alpha^2 = \alpha$. Then there exist a maximal and a minimal $\mathds{Z}$-graded hom-Lie superalgebra whose local parts are isomorphic to $\hat{\mathfrak{g}}$.
\end{theorem}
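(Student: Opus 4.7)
The plan is to mirror Kac's classical construction of maximal and minimal $\mathds{Z}$-graded Lie superalgebras attached to a local Lie superalgebra, upgrading each step so that the twisting endomorphism $\alpha$ is carried along. The hypothesis $\alpha^2=\alpha$ (idempotence) and multiplicativity are exactly what is needed to extend $\alpha$ consistently to the free object and to pass it through the quotients that arise.

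First, I would build the \emph{maximal} object $\mathfrak{g}_{\max}(\hat{\mathfrak{g}})$. Take the $\mathds{Z}_2$-graded vector space $V=\mathfrak{g}_{-1}\oplus\mathfrak{g}_0\oplus\mathfrak{g}_1$, placed in $\mathds{Z}$-degrees $-1,0,1$, and form the free $\mathds{Z}$-graded hom-Lie superalgebra $F=\bigoplus_{i\in\mathds{Z}}F_i$ on $V$: concretely, $F$ is the free graded Lie superalgebra on $V$, with $\alpha$ extended from $V$ by declaring $\alpha([x,y])=[\alpha(x),\alpha(y)]$ and then extending linearly. The fact that $\alpha$ is already defined on $V$ with $\alpha^2=\alpha$ guarantees the extension is well-defined on words of arbitrary length and that the hom-Jacobi identity is respected. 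Now let $R\subset F$ be the graded hom-ideal generated by the elements
\begin{equation*}
[a,b]_F-\{a,b\}_{\hat{\mathfrak{g}}},\qquad a\in\mathfrak{g}_i,\ b\in\mathfrak{g}_j,\ |i+j|\le 1,
\end{equation*}
together with the relations $\alpha_F(v)-\alpha_{\hat{\mathfrak{g}}}(v)$ for $v\in V$. Since $\alpha_F$ preserves $R$ (using $\alpha^2=\alpha$ and multiplicativity on $\hat{\mathfrak{g}}$), the quotient $\mathfrak{g}_{\max}:=F/R$ inherits a well-defined $\mathds{Z}$-graded multiplicative hom-Lie superalgebra structure.

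The crucial step—and the main obstacle—is to prove that the natural map $\hat{\mathfrak{g}}\to\mathfrak{g}_{\max}$ is injective in each of the three local degrees; equivalently, $R$ contains no nonzero element of degree $-1,0$, or $1$. Without this, $\mathfrak{g}_{\max}$ need not have $\hat{\mathfrak{g}}$ as its local part. Following Kac's strategy, I would build an auxiliary $F$-representation: start from a suitable hom-module over $\mathfrak{g}_0$ (e.g.\ a free module over the universal enveloping algebra of $\mathfrak{g}_0$ containing copies of $\mathfrak{g}_{-1}$ and $\mathfrak{g}_1$), let $\mathfrak{g}_{-1}$ and $\mathfrak{g}_1$ act by the prescribed brackets, and verify the hom-Jacobi identities using $\alpha^2=\alpha$. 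By the universal property of $F$, this promotes to an action of $F$ on which every generator of $R$ acts as zero, so $R$ acts trivially. Since $V$ already embeds faithfully into this module, it follows that $R\cap V=0$ and hence the local part of $\mathfrak{g}_{\max}$ is isomorphic to $\hat{\mathfrak{g}}$. Maximality is then immediate: any $\mathds{Z}$-graded hom-Lie superalgebra $\mathfrak{g}'$ with local part $\hat{\mathfrak{g}}'\cong\hat{\mathfrak{g}}$ receives, via the universal property of $F$, a graded hom-morphism $F\to\mathfrak{g}'$ that kills $R$, yielding the required epimorphism $\mathfrak{g}_{\max}\twoheadrightarrow\mathfrak{g}'$.

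For the \emph{minimal} object, I would let $I$ be the sum of all $\mathds{Z}$-graded hom-ideals of $\mathfrak{g}_{\max}$ satisfying $I\cap(\mathfrak{g}_{-1}\oplus\mathfrak{g}_0\oplus\mathfrak{g}_1)=\{0\}$. A routine verification shows this family is closed under sums, so $I$ itself has trivial intersection with the local part; moreover $\alpha(I)\subseteq I$ because $\alpha$ preserves the grading and each summand. Setting $\mathfrak{g}_{\min}(\hat{\mathfrak{g}}):=\mathfrak{g}_{\max}/I$, the local part is preserved and, given any other $\mathds{Z}$-graded hom-Lie superalgebra $\mathfrak{g}'$ with the same local part, the kernel of the canonical epimorphism $\mathfrak{g}_{\max}\twoheadrightarrow\mathfrak{g}'$ meets the local part trivially, hence is contained in $I$, so the map factors through $\mathfrak{g}'\twoheadrightarrow\mathfrak{g}_{\min}$ as required. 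Beyond the embedding of the local part in $\mathfrak{g}_{\max}$, which is the only nontrivial point, the rest is formal diagram-chasing and checking that $\alpha$ remains multiplicative on each quotient—exactly where the hypothesis $\alpha^2=\alpha$ is used.
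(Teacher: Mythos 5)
Your proposal is correct and follows essentially the same route as the paper: form the free hom-Lie superalgebra on $\hat{\mathfrak{g}}$, quotient by the hom-ideal of local relations, prove the local part embeds by constructing a faithful representation that kills that ideal (the paper realizes this concretely on the tensor algebra $T(\mathfrak{g}_{-1})$, with a symmetric second representation for the positive side, rather than on a module over the enveloping algebra of $\mathfrak{g}_0$), and obtain the minimal algebra as the quotient by the unique maximal graded hom-ideal meeting the local part trivially. The only difference is in the concrete choice of auxiliary module for the injectivity step, which is a matter of detail rather than of strategy.
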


\begin{proof}
	Let $F (\mathfrak{\hat{g}})$ be the free hom-Lie superalgebra freely generated by $\mathfrak{\hat{g}}$ and $\tilde{I}$ be the hom-ideal of $F (\mathfrak{\hat{g}})$ generated by all relations of the form $[x,y]=z$ that hold in $\mathfrak{\hat{g}}$. Let $\mathfrak{\tilde{g}} = \frac{F (\mathfrak{\hat{g}})}{\tilde{I}}$ and denote by $\mathfrak{\tilde{g}}_{-1}$, $\mathfrak{\tilde{g}}_{0}$ and $\mathfrak{\tilde{g}}_{1}$ the images of the spaces $\mathfrak{g}_{-1}$, $\mathfrak{g}_{0}$ and $\mathfrak{g}_{1}$ respectively under the natural homomorphism of $F (\mathfrak{\hat{g}})$ onto $\mathfrak{\hat{g}}$.\\
Let $\mathfrak{\tilde{g}}_{-}$ [respectively, $\mathfrak{\tilde{g}}_{+}$ ] signify the hom-subalgebra of $\mathfrak{\tilde{g}}$ generated by $\mathfrak{\tilde{g}}_{-1}$ [respectively, $\mathfrak{\tilde{g}}_{1}$]. We claim that:
\begin{enumerate}
  \item[(\textbf{a})] $\mathfrak{\tilde{g}} = \mathfrak{\tilde{g}}_{-} \oplus \mathfrak{\tilde{g}}_{0} \oplus \mathfrak{\tilde{g}}_{+}$ and the hom-subalgebra $\mathfrak{\tilde{g}}_{-}$ and $\mathfrak{\tilde{g}}_{+}$ are freely generated by the spaces $\mathfrak{\tilde{g}}_{-1}$ and $\mathfrak{\tilde{g}}_{1}$, respectively.
  \item[(\textbf{b})] The local hom-Lie superalgebra $\mathfrak{\tilde{g}}_{-1} \oplus \mathfrak{\tilde{g}}_{0} \oplus \mathfrak{\tilde{g}}_{1}$ is isomorphic to $\mathfrak{\hat{g}}$.
\end{enumerate}
Let $T=T(\mathfrak{g}_{-1})= \oplus_{i=0}^{\infty}T_i$ be the tensor over the space $\mathfrak{g}_{-1}$. We define the representation $\phi$ of the hom-Lie superalgebra $F (\mathfrak{\hat{g}})$ as follows:\\
if $y \in \mathfrak{g}_{-1}$:
\begin{equation*}
  \varphi(y)a = y \otimes a, \quad a \in T;
\end{equation*}
if $z \in \mathfrak{g}_{0}$:
\begin{align*}
  & \varphi(z)1=0; \\
  & \varphi(z)a=[z,a], \quad a \in \mathfrak{g}_{-1} \subset T; \\
  & \varphi(z)(a_1 \otimes a_2)= \varphi (z)a_1 \otimes a_2 + a_1 \otimes \varphi(z)a_2, \quad a_1,a_2 \in T;
\end{align*}
if $x \in \mathfrak{g}_{1}$:
\begin{align*}
    & \varphi(x)1=0; \\
    & \varphi(x)(a_1 \otimes a_2)= \varphi ([x,\alpha(a_1)]) a_2 + \alpha(a_1) \otimes \varphi(x)a_2, \quad a_1 \in \mathfrak{g}_{-1} \subset T,\quad a_2 \in T;
\end{align*}
and $$\phi(\alpha (g)) = \varphi (\alpha(g)), \quad g \in F (\mathfrak{\hat{g}}).$$
We want to show that $\tilde{I} \subset Ker \phi$, so we are going to proof by induction on the degree of the elements of the $T$.\\
Let $x \in \mathfrak{g}_{1}$, $y \in \mathfrak{g}_{-1}$, $a_1 \in \mathfrak{g}_{-1} \subset T$, $z \in \mathfrak{g}_{0}$, $a_2 \in T$ and $a= a_1 \otimes a_2$. We have the following relations.
\begin{align*}
  (\varphi(\alpha(y)) \varphi(\alpha(z)) - \varphi(\alpha(z)) \varphi(\alpha(y)))a
  &= \varphi(\alpha(y)) \varphi(\alpha(z))a - \varphi(\alpha(z)) \varphi(\alpha(y))a\\
  &= \alpha(y) \otimes  \varphi(\alpha(z))a - \varphi(\alpha(z)) (\alpha(y) \otimes a)\\
  &= \alpha(y) \otimes  \varphi(\alpha(z))a - \varphi(\alpha(z)) \alpha(y) \otimes a\\
  & \quad - \alpha(y) \otimes \varphi(\alpha(z))a\\
  &= -\varphi(\alpha(z)) \alpha(y) \otimes a = - [\alpha(z),\alpha(y)] \otimes a\\
  &= [\alpha(y),\alpha(z)] \otimes a\\
  &= \varphi [\alpha(y),\alpha(z)]a.
\end{align*}
Then we have
\begin{align}\label{1}
  \varphi [\alpha(y),\alpha(z)] = \varphi(\alpha(y)) \varphi(\alpha(z)) - \varphi(\alpha(z)) \varphi(\alpha(y)).
\end{align}
Furthermore,
\begin{align*}
  (\varphi(\alpha(y)) \varphi(\alpha(x)) &- \varphi(\alpha(x)) \varphi(\alpha(y)))a =
  \varphi(\alpha(y)) \varphi(\alpha(x))a - \varphi(\alpha(x)) \varphi(\alpha(y))a\\
  &= \alpha(y) \otimes  \varphi(\alpha(x))a - \varphi(\alpha(x)) (\alpha(y) \otimes a)\\
  &= \alpha(y) \otimes  \varphi(\alpha(x))a - \varphi([\alpha(x),\alpha^2(y)])a - \alpha^2(y) \otimes \varphi(\alpha(x))a\\
  &= - \varphi([\alpha(x),\alpha(y)])a,
\end{align*}
thus
\begin{align}\label{2}
  \varphi [\alpha(x),\alpha(y)] = \varphi(\alpha(x)) \varphi(\alpha(y)) - \varphi(\alpha(y)) \varphi(\alpha(x)).
\end{align}
Finally we have
\begin{align*}
  \varphi(\alpha(z)) \varphi(\alpha(x))&(a_1 \otimes a_2) = \varphi(\alpha(z))(\varphi([\alpha(x),\alpha(a_1)])a_2 + \alpha(a_1) \otimes \varphi(\alpha(x))a_2 )\\
  &= \varphi(\alpha(z)) \varphi([\alpha(x),\alpha(a_1)])a_2 + \varphi(\alpha(z)) (\alpha(a_1) \otimes \varphi(\alpha(x))a_2)\\
  &= \varphi(\alpha(z)) \varphi([\alpha(x),\alpha(a_1)])a_2 + \varphi(\alpha(z))\alpha(a_1) \otimes \varphi(\alpha(x)a_2\\
  & \quad + \alpha(a_1) \otimes \varphi(\alpha(z)) \varphi(\alpha(x))a_2\\
  &= \varphi(\alpha(z)) \varphi([\alpha(x),\alpha(a_1)])a_2 + [\alpha(z),\alpha(a_1)] \otimes \varphi(\alpha(x)a_2\\
  & \quad + \alpha(a_1) \otimes \varphi(\alpha(z)) \varphi(\alpha(x))a_2,
\end{align*}
in the other hand we have
\begin{align*}
  \varphi(\alpha(x)) \varphi(\alpha(z))(a_1 \otimes a_2) &= \varphi(\alpha(x)) ( \varphi(\alpha(z))a_1 \otimes a_2 + a_1 \otimes \varphi(\alpha(z))a_2 )\\
  &= \varphi(\alpha(x)) ( \varphi(\alpha(z))a_1 \otimes a_2) + \varphi(\alpha(x)) (a_1 \otimes \varphi(\alpha(z))a_2)\\
  &= \varphi(\alpha(x)) ( [\alpha(z),a_1] \otimes a_2) + \varphi(\alpha(x)) (a_1 \otimes \varphi(\alpha(z))a_2)\\
  &= \varphi([\alpha(x),\alpha[\alpha(z),a_1]])a_2 + \alpha([\alpha(z),a_1]) \otimes \varphi(\alpha(x))a_2\\
  & \quad + \varphi([\alpha(x),\alpha(a_1)]) \varphi(\alpha(z))a_2 +  \alpha(a_1)\varphi(\alpha(x)) \varphi(\alpha(z))a_2\\
  &= \varphi([\alpha(x),[\alpha(z),\alpha(a_1)]])a_2 + [\alpha(z),\alpha(a_1)]) \otimes \varphi(\alpha(x))a_2\\
  & \quad + \varphi([\alpha(x),\alpha(a_1)]) \varphi(\alpha(z))a_2 +  \alpha(a_1)\varphi(\alpha(x)) \varphi(\alpha(z))a_2.
\end{align*}
By the induction hypothesis we have
\begin{align*}
  \varphi([x,z])a = (\varphi(x) \varphi(z) - \varphi(z) \varphi(x))a, \quad a \in T;
\end{align*}
so
\begin{align*}
  (\varphi(\alpha(z))\varphi(\alpha(x)) - & \varphi(\alpha(x)) \varphi(\alpha(z)) ) (a_1 \otimes a_2)\\
  & =(\varphi(\alpha(z))\varphi([\alpha(x),\alpha(a_1)])- \varphi ([\alpha(x),\alpha(a_1)])\varphi(\alpha(z)))a_2\\
  & + \alpha(a_1) \otimes ( \varphi(\alpha(z))\varphi(\alpha(x))- \varphi(\alpha(x)) \varphi(\alpha(z)) )a_2\\
  & - \varphi([\alpha(x),[\alpha(z),\alpha(a_1)]])a_2\\
  & = \varphi( [\alpha(z),[\alpha(x),\alpha(a_1)]] - [\alpha(x),[\alpha(z),\alpha(a_1)]] )a_2\\
  & \quad + \alpha(a_1) \otimes \varphi([\alpha(z),\alpha(x)])a_2\\
  &= \varphi( [\alpha(a_1),[\alpha(z),\alpha(x)]])a_2 + \alpha(a_1) \otimes \varphi([\alpha(z),\alpha(x)])a_2\\
  &= \varphi([[\alpha(z),\alpha(x)],\alpha(a_1)])+ \alpha(a_1) \otimes \varphi([\alpha(z),\alpha(x)])a_2\\
  &= \varphi([\alpha(z),\alpha(x)])(a_1 \otimes a_2);
\end{align*}
thus
\begin{align}\label{3}
  \varphi [\alpha(z),\alpha(x)] = \varphi(\alpha(z)) \varphi(\alpha(x)) - \varphi(\alpha(x)) \varphi(\alpha(z)).
\end{align}
Therefore by (\ref{1}), (\ref{2}) and (\ref{3}) we have proved $\tilde{I} \subset Ker \phi$.
Hence, the mapping $\phi$ induces a representation $\tilde{\phi}$ of the $\mathfrak{\tilde{g}}=\frac{F (\mathfrak{\hat{g}})}{\tilde{I}}$.\\
One can see that free hom-associative superalgebra $T' = \bigoplus_{i=1}^{\infty}T_i$ endowed with the structure
\begin{equation*}
  a \circ b = a \otimes b - (-1)^{|a||b|} b \otimes a, \quad a,b \in T',
\end{equation*}
is a hom-Lie superalgebra which is generated by $\mathfrak{g}_{-1}$ freely \cite{armakan2019enveloping}.\\
Since
\begin{equation*}
  \varphi([y_1,...,y_r])1 = y_1 \circ ... \circ y_r, \quad y_1,...,y_r \in \mathfrak{g}_{-1}.
\end{equation*}

Therefore $\mathfrak{\tilde{g}}_{-}$ is freely generated by $\mathfrak{\tilde{g}}_{-1}$ and the space $\mathfrak{g}_{-1}$ is mapped onto $\mathfrak{\tilde{g}}_{-1}$ isomorphically by natural homomorphism of $F (\mathfrak{\hat{g}})$ onto $\mathfrak{\tilde{g}}$. In the same way we can obtain a representation $\tilde{\phi}_1$ of the $\tilde{\mathfrak{g}}$ and we can see that $\mathfrak{\tilde{g}}_{+}$ is generated by $\mathfrak{\tilde{g}}_{1}$ freely and $\mathfrak{g}_{1}$ is mapped onto $\mathfrak{\tilde{g}}_{1}$ isomorphically.

Let $g_{-} \in \mathfrak{\tilde{g}}_{-}$, $g_{0} \in \mathfrak{\tilde{g}}_{0}$, $g_{+} \in \mathfrak{\tilde{g}}_{+}$ and $g = g_{-} + g_0 + g_{+} =0$ then $\phi(g)1= g_{-}=0$ and $\tilde{\phi}_1(g)1= g_{+}=0$; hence $g_0 =0$. Thus the sum of $\mathfrak{\tilde{g}}_{-}$, $\mathfrak{\tilde{g}}_{0}$ and $\mathfrak{\tilde{g}}_{+}$ is direct. So we have proved part $(\textbf{a})$.

The local hom-Lie superalgebra $\hat{\mathfrak{g}} = \mathfrak{g}_{-1} \oplus \mathfrak{g}_0 \oplus \mathfrak{g}_1$ embedding into a local hom-Lie superalgebra $\hat{\mathfrak{g}}' = \mathfrak{g}'_{-1} \oplus \mathfrak{g}_0 \oplus \mathfrak{g}_1$, where $\mathfrak{g}'_{-1} = \mathfrak{g}_{-1} \oplus V$ and $[V,\mathfrak{g}_1]=0$, $[\mathfrak{g}_0,V]\subset V$ and the representation of $\mathfrak{g}_0$ on $V$ is faithful, we can suppose that in the local hom-Lie superalgebra $\hat{\mathfrak{g}}$, the representation of $\mathfrak{g}_0$ on $\mathfrak{g}_{-1}$ is faithful. But $ Ker{\tilde{\phi}} \cap \mathfrak{g}_0 = 0$, therefore $\mathfrak{g}_0$ is mapped onto $\hat{\mathfrak{g}}_{0}$ isomorphically. So we have proved (\textbf{b}).

Hence hom-Lie superalgebra $\tilde{\mathfrak{g}} = \bigoplus \tilde{\mathfrak{g}}_i$ is a maximal $\mathds{Z}$-graded hom-Lie superalgebra with local part $\hat{\mathfrak{g}}$.

It is obvious that among the graded hom-ideals that intersection with $\mathfrak{\tilde{g}}_{-1} \oplus \mathfrak{\tilde{g}}_{0} \oplus \mathfrak{\tilde{g}}_{1}$ is zero, there is a unique maximal hom-ideal $I$. The hom-Lie superalgebra $\mathfrak{g}= \frac{\mathfrak{\tilde{g}}}{I}$ is a minimal $\mathds{Z}$-graded hom-Lie superalgebra with local part $\hat{\mathfrak{g}}$.
\end{proof}

\begin{corollary}
	Let $\hat{\mathfrak{g}} = \mathfrak{g}_{-1} \oplus \mathfrak{g}_0 \oplus \mathfrak{g}_1$ be a local hom-Lie superalgebra.
	\begin{enumerate}
		\item [i.] Any $\mathds{Z}$-graded hom-Lie superalgebra whose local part is isomorphic to $\hat{ \mathfrak{g}}$, is a quotient of maximal $\mathds{Z}$-graded hom-Lie superalgebra $\mathfrak{g}_{max}(\hat{\mathfrak{g}})$.
		\item [ii.] Minimal $\mathds{Z}$-graded hom-Lie superalgebra $\mathfrak{g}_{min}(\hat{\mathfrak{g}})$ is a quotient of any $\mathds{Z}$-graded hom-Lie superalgebra whose local part is isomorphic to $\hat{ \mathfrak{g}}$.
		\item [iii.] $\mathfrak{g}_{max}(\hat{\mathfrak{g}})$ has a unique maximal graded hom-ideal $J_{max}$ such that
		\begin{equation*}
			J_{max} \cap \hat{\mathfrak{g}} = \{0\} \quad
             \text{and} \quad
             \mathfrak{g}_{max}(\hat{\mathfrak{g}})/J_{max} = \mathfrak{g}_{min}(\hat{\mathfrak{g}}).
		\end{equation*}
	\end{enumerate}
\end{corollary}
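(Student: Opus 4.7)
The plan is to derive all three statements from the definitions of $\mathfrak{g}_{max}(\hat{\mathfrak{g}})$ and $\mathfrak{g}_{min}(\hat{\mathfrak{g}})$ together with the construction carried out in Theorem \ref{Exist}.

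For \textbf{(i)}, suppose $\mathfrak{g}'=\bigoplus\mathfrak{g}'_i$ is an arbitrary $\mathds{Z}$-graded hom-Lie superalgebra whose local part $\hat{\mathfrak{g}}'=\mathfrak{g}'_{-1}\oplus\mathfrak{g}'_0\oplus\mathfrak{g}'_1$ is isomorphic to $\hat{\mathfrak{g}}$ via some local isomorphism $\psi$. By the very definition of the maximal $\mathds{Z}$-graded hom-Lie superalgebra, $\psi$ extends to an epimorphism $\mathfrak{g}_{max}(\hat{\mathfrak{g}})\twoheadrightarrow\mathfrak{g}'$, so $\mathfrak{g}'$ is a quotient of $\mathfrak{g}_{max}(\hat{\mathfrak{g}})$. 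For \textbf{(ii)} the argument is symmetric: the definition of the minimal $\mathds{Z}$-graded hom-Lie superalgebra says that the local isomorphism $\psi^{-1}\colon\hat{\mathfrak{g}}'\to\hat{\mathfrak{g}}$ extends to an epimorphism $\mathfrak{g}'\twoheadrightarrow\mathfrak{g}_{min}(\hat{\mathfrak{g}})$, exhibiting $\mathfrak{g}_{min}(\hat{\mathfrak{g}})$ as a quotient of $\mathfrak{g}'$.

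For \textbf{(iii)}, first apply \textbf{(i)} with $\mathfrak{g}'=\mathfrak{g}_{min}(\hat{\mathfrak{g}})$: there is an epimorphism $\pi\colon\mathfrak{g}_{max}(\hat{\mathfrak{g}})\twoheadrightarrow\mathfrak{g}_{min}(\hat{\mathfrak{g}})$ that extends the identity of $\hat{\mathfrak{g}}$. Define $J_{max}=\ker\pi$. Being the kernel of a graded homomorphism of hom-Lie superalgebras, $J_{max}$ is a graded hom-ideal, and because $\pi$ is the identity on $\hat{\mathfrak{g}}$ we have $J_{max}\cap\hat{\mathfrak{g}}=\{0\}$; the first isomorphism theorem then gives $\mathfrak{g}_{max}(\hat{\mathfrak{g}})/J_{max}\cong\mathfrak{g}_{min}(\hat{\mathfrak{g}})$.

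It remains to show maximality and uniqueness of $J_{max}$ among graded hom-ideals meeting $\hat{\mathfrak{g}}$ trivially. This is exactly what the last paragraph of the proof of Theorem \ref{Exist} established: the collection of graded hom-ideals $K$ of $\mathfrak{g}_{max}(\hat{\mathfrak{g}})$ satisfying $K\cap(\mathfrak{g}_{-1}\oplus\mathfrak{g}_0\oplus\mathfrak{g}_1)=\{0\}$ is closed under sums (the sum of two such ideals still intersects the local part trivially because the intersection is graded and a nonzero element would, by transitivity/bitransitivity, produce a nonzero element in $\mathfrak{g}_{-1}\oplus\mathfrak{g}_0\oplus\mathfrak{g}_1$), and therefore contains a unique largest element $I$. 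The quotient $\mathfrak{g}_{max}(\hat{\mathfrak{g}})/I$ was defined to be $\mathfrak{g}_{min}(\hat{\mathfrak{g}})$, so $J_{max}=\ker\pi$ coincides with $I$. The main subtlety to keep in mind is just this uniqueness of the maximal such ideal; once it is granted by Theorem \ref{Exist}, the corollary falls out immediately from the universal properties in parts (i) and (ii).
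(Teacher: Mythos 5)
Your proof is correct and takes essentially the same route the paper intends: the paper's own proof is just the one line ``obvious by Theorem~\ref{Exist}'', and you have unwound exactly the content of that theorem (the universal properties defining $\mathfrak{g}_{max}(\hat{\mathfrak{g}})$ and $\mathfrak{g}_{min}(\hat{\mathfrak{g}})$ for (i) and (ii), and the unique maximal hom-ideal $I$ constructed at the end of its proof for (iii)). The only step worth making explicit is the identification $\ker\pi=I$ in (iii): $\ker\pi\subseteq I$ by maximality of $I$ among graded hom-ideals meeting $\hat{\mathfrak{g}}$ trivially, and equality then follows by comparing the finite-dimensional graded components of $\mathfrak{g}_{max}(\hat{\mathfrak{g}})/\ker\pi\cong\mathfrak{g}_{min}(\hat{\mathfrak{g}})\cong\mathfrak{g}_{max}(\hat{\mathfrak{g}})/I$ --- or one can simply set $J_{max}:=I$ and avoid the detour through $\pi$ altogether.
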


\begin{proof}
	The proof is obvious by theorem \ref{Exist}.
\end{proof}

\begin{proposition}
	\begin{enumerate}
		\item [i.] A bitransitive $\mathds{Z}$-graded hom-Lie superalgebra is minimal.
		\item [ii.] A minimal $\mathds{Z}$-graded hom-Lie superalgebra with bitransitive local part is bitransitive.
		\item [iii.] Two bitransitive $\mathds{Z}$-graded hom-Lie superalgebra are isomorphic, if and only if their local parts are isomorphic.
	\end{enumerate}
\end{proposition}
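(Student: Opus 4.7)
\emph{Proof plan.} My strategy begins with a reformulation of minimality extracted from the preceding corollary: since $\mathfrak{g}_{\min}(\hat{\mathfrak{g}}) = \mathfrak{g}_{\max}(\hat{\mathfrak{g}})/J_{\max}$ with $J_{\max}$ the unique maximal graded hom-ideal disjoint from $\hat{\mathfrak{g}}$, a $\mathds{Z}$-graded hom-Lie superalgebra $\mathfrak{g}$ generated by its local part is minimal if and only if every graded hom-ideal $I \subseteq \mathfrak{g}$ satisfying $I \cap \hat{\mathfrak{g}} = \{0\}$ vanishes; in that case $\mathfrak{g} \cong \mathfrak{g}_{\min}(\hat{\mathfrak{g}})$ (any such $I$ lifts through $\mathfrak{g}_{\max}(\hat{\mathfrak{g}}) \twoheadrightarrow \mathfrak{g}$ to an ideal still disjoint from the local part, hence contained in $J_{\max}$). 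With this reformulation in hand, (i) reduces to a short induction: assuming $\mathfrak{g}$ bitransitive and $I$ a graded hom-ideal with $I \cap \hat{\mathfrak{g}} = \{0\}$, I induct on $|k|$ to show $I \cap \mathfrak{g}_k = \{0\}$; the cases $|k| \le 1$ are the hypothesis, and for $k \ge 2$ any $a \in I \cap \mathfrak{g}_k$ satisfies $[a,\mathfrak{g}_{-1}] \subseteq I \cap \mathfrak{g}_{k-1} = \{0\}$ by induction, so transitivity forces $a = 0$; the case $k \le -2$ is symmetric using the second half of bitransitivity.

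For (ii), suppose $\mathfrak{g}$ is minimal with bitransitive local part, and let $a \in \mathfrak{g}_k$ satisfy $[a,\mathfrak{g}_{-1}] = 0$ with $k \ge 0$. The cases $k \in \{0,1\}$ are immediate from the bitransitivity of $\hat{\mathfrak{g}}$. For $k \ge 2$, consider the graded hom-ideal $\langle a \rangle$ generated by $a$; I claim $\langle a \rangle \cap \hat{\mathfrak{g}} = \{0\}$, so that the reformulation of minimality forces $\langle a \rangle = \{0\}$ and hence $a = 0$. Since $\mathfrak{g}$ is generated by $\hat{\mathfrak{g}}$, every element of $\langle a \rangle$ of degree at most $1$ is a linear combination of iterated brackets of $a$ (and its $\alpha$-images) with local generators whose total degree is at most $1 - k \le -1$, so at least one factor must lie in $\mathfrak{g}_{-1}$. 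The hom-Jacobi identity is then applied repeatedly to push this $\mathfrak{g}_{-1}$-factor inward, relying on multiplicativity (so that $\alpha^{s}([a,\mathfrak{g}_{-1}]) = [\alpha^{s}(a),\alpha^{s}(\mathfrak{g}_{-1})] = 0$) together with the idempotency $\alpha^{2} = \alpha$ to eventually force the bracket to vanish. The symmetric argument involving $\mathfrak{g}_1$ covers the case $k \le -2$.

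For (iii), the forward implication is tautological; for the converse, by (i) both sides are minimal, so by the reformulation each is isomorphic to $\mathfrak{g}_{\min}(\hat{\mathfrak{g}})$, and therefore the two are isomorphic. The principal obstacle is clearly part (ii): the hom-Jacobi identity carries $\alpha$-twists, so moving an $\mathrm{ad}(\mathfrak{g}_{-1})$-operator inward through iterated brackets demands careful tracking of which power of $\alpha$ lands on each argument, and it is not transparent how to deduce $[\alpha^{s}(a), \mathfrak{g}_{-1}] = 0$ from $[\alpha^{s}(a), \alpha^{s}(\mathfrak{g}_{-1})] = 0$ when $\alpha$ is only idempotent. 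Exactly here the hypothesis $\alpha^{2} = \alpha$ inherited from Theorem~\ref{Exist}, combined with grading preservation, must be leveraged to make the rewriting close up.
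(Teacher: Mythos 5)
Your parts (i) and (iii) are correct and coincide with the paper's own argument: the paper proves (i) by taking a nonzero graded hom-ideal $J$ with $J\cap(\mathfrak{g}_{-1}\oplus\mathfrak{g}_0\oplus\mathfrak{g}_1)=\{0\}$ and choosing $k$ least in absolute value with $J\cap\mathfrak{g}_k\neq\{0\}$, which is exactly your induction read as a minimal counterexample, and it derives (iii) from (i) just as you do. Your reformulation of minimality through $J_{\max}$ is also sound, since the sum of graded hom-ideals meeting the local part trivially again meets it trivially.

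The genuine gap is in (ii), and you have in effect named it yourself. Your plan is to show that the graded hom-ideal $\langle a\rangle$ generated by $a\in\mathfrak{g}_k$, $k\ge 2$, with $[a,\mathfrak{g}_{-1}]=0$ meets $\hat{\mathfrak{g}}$ trivially by ``pushing the $\mathfrak{g}_{-1}$-factor inward''; but you then concede that you do not see how to pass from $[\alpha^{s}(a),\alpha^{s}(\mathfrak{g}_{-1})]=0$ to the statements actually needed when $\alpha$ is merely idempotent rather than surjective. That concession is the whole of the difficulty: without it, $\langle a\rangle\cap\hat{\mathfrak{g}}=\{0\}$ is unproved and (ii) remains a plan, not a proof. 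The paper instead routes (ii) through the proof of Proposition \ref{prop1}: one forms the space $\bigoplus_{m,l\ge 0}(ad_{s}(\mathfrak{g}_1))^{m}(ad_{s}(\mathfrak{g}_0))^{l}(a)$, which by construction lies in $\bigoplus_{j\ge k}\mathfrak{g}_j$ and therefore avoids $\hat{\mathfrak{g}}$ automatically; the only remaining task is to verify that this space is closed under $\alpha$ and under $ad(\mathfrak{g}_{-1})$, which is where $[a,\mathfrak{g}_{-1}]=0$ and the hom-Jacobi identity enter. This is a cleaner division of labour than computing low-degree components of $\langle a\rangle$, and you should adopt it; but note that the paper itself only asserts this closure is ``obvious,'' and the $\alpha$-twists in the hom-Jacobi identity (the outer $\mathfrak{g}_{-1}$-element must appear as $\alpha$ of something) raise precisely the surjectivity issue you identified. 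A complete proof of (ii) must either assume $\alpha$ regular or track the powers of $\alpha$ explicitly as in the proof of Theorem \ref{Exist}; neither your sketch nor the paper's reference dispatches this step.
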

\begin{proof}
	If the bitransitive $\mathds{Z}$-graded hom-Lie superalgebra $\mathfrak{g}= \bigoplus \mathfrak{g}_i$ were not minimal, there would exist a nonzero graded hom-ideal $J \subset \mathfrak{g}$ such that
	 \begin{equation*}
		J \cap (\mathfrak{g}_{-1} \oplus \mathfrak{g}_0 \oplus \mathfrak{g}_1) =0.
	 \end{equation*}
	Choose the least k in absolute value such that
	 \begin{equation*}
		[J\cap \mathfrak{g}_k , \mathfrak{g}_{-1}]=0,
	\end{equation*}
	\begin{equation*}
		J \cap \mathfrak{g}_k \ne 0.
	\end{equation*}
	Let $k>0$, then which contradicts the transivity of $\mathfrak{g}$. Thus we have proved (i).\\
	Assertion (ii) is proved in the same way as proposition \ref{prop1}.\\
	The last statement follws from (i).
\end{proof}

\section{Invariant bilinear forms}
In this section we want to study some property of invariant bilinear forms on $\mathds{Z}$-graded hom-Lie superalgebras. At first we recall some definitions on bilinear forms. After that we prove that a consistent supersymmetric $\alpha$-invariant bilinear form on the local part can be extended uniquely to a consistent supersymmetric $\alpha$-invariant bilinear form with the same property on the whole $\mathds{Z}$-graded hom-Lie superalgebra.
\begin{definition} \cite{benayadi2014hom}
	Let $(\mathfrak{g},[.,.],\alpha)$ be a hom-Lie superalgebras. Let $f$ be a bilinear form on $\mathfrak{g}$.\\
	The form $f$ is said to be consistent, if
	\begin{equation*}
	   f(x,y)=0 \quad \text{for all} \quad x \in g_0, y \in \mathfrak{g}_1;
	\end{equation*}
	$f$ is said to be nondegenerate, if
	\begin{equation*}
	\mathfrak{g}^{\perp}=\{x \in \mathfrak{g} | f(x,y)=0, \quad \text{for all} \quad y \in \mathfrak{g} \};
	\end{equation*}
	$f$ is said to be invariant, if
	\begin{equation*}
		f([x,y],z)=f(x,[y,z])  \quad \text{for all} \quad x, y, z \in \mathfrak{g};
	\end{equation*}
	$f$ is said to be supersymmetric, if
	\begin{equation*}
	   f(x,y)=-(-1)^{|x||y|}f(y,x) \quad \text{for all} \quad x, y, z \in \mathfrak{g}.
	\end{equation*}
    An invariant bilinear form $f$ is said to be $\alpha$-invariant, if
    \begin{equation*}
		f(\alpha(x),y)=f(x,\alpha(y)) \quad \text{for all} \quad x, y, z \in \mathfrak{g}.
	\end{equation*}
\end{definition}

It is obvious that the kernel of an invariant form $f$ on $\mathfrak{g}$ (that is, the set of $x \in \mathfrak{g}$ for which $f(x,\mathfrak{g})=0$) is an hom-ideal in $\mathfrak{g}$. Hence we have the following proposition.

\begin{proposition}
	Let $\mathfrak{g}$ be a simple hom-Lie superalgebra. Then every nonzero invariant form on $\mathfrak{g}$ is nondegenerate and any two invariant forms on $\mathfrak{g}$ are proportional.
\end{proposition}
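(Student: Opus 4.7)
The plan is to leverage the observation made immediately before the statement: for any invariant bilinear form $f$, the kernel $\mathfrak{g}^{\perp}$ is a hom-ideal of $\mathfrak{g}$. For the first assertion, let $f$ be a nonzero invariant form; then $\mathfrak{g}^{\perp}$ is a \emph{proper} hom-ideal, because it cannot equal $\mathfrak{g}$ (otherwise $f$ would vanish identically). Since $\mathfrak{g}$ is simple, $\mathfrak{g}^{\perp} = \{0\}$, which is precisely nondegeneracy.

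For the second assertion, I would fix two nonzero invariant forms $f_{1}, f_{2}$. By part one, both are nondegenerate, so the map $x \mapsto f_{1}(x, \cdot)$ is a linear isomorphism $\mathfrak{g} \to \mathfrak{g}^{*}$, and hence there is a unique linear map $T : \mathfrak{g} \to \mathfrak{g}$ with $f_{2}(x, y) = f_{1}(T x, y)$ for all $x, y \in \mathfrak{g}$. The key computation is to verify, using invariance of both $f_{1}$ and $f_{2}$ together with nondegeneracy of $f_{1}$, that $T$ intertwines the adjoint action, i.e.\ $T[a, x] = [a, T x]$ up to the standard graded sign. Once this is in hand, both $\ker T$ and the image of $T$ are $\mathrm{ad}$-stable, and one also checks stability under $\alpha$, so they are hom-ideals. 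Simplicity together with $f_{2} \neq 0$ then forces $T$ to be a linear bijection commuting with every $\mathrm{ad}\, a$.

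The hard part is promoting ``$T$ is a self-intertwiner of $\mathfrak{g}$'' to ``$T$ is a scalar multiple of the identity'', i.e.\ a Schur-type step. Over an algebraically closed $\mathds{K}$, I would pick $\lambda$ in the spectrum of $T$; then $T - \lambda \,\mathrm{id}$ is again a self-intertwiner with nonzero kernel, so by simplicity this kernel is all of $\mathfrak{g}$, giving $T = \lambda\, \mathrm{id}$ and therefore $f_{2} = \lambda f_{1}$. Over a general ground field one has to either extend scalars to $\overline{\mathds{K}}$ and descend, or argue through the minimal polynomial of $T$; in the $\mathds{Z}$-graded setting, the standing assumption that each $\mathfrak{g}_{i}$ is finite-dimensional lets one decompose $T$ along the grading so that Schur applies on each finite-dimensional piece. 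This last point is where the argument is most delicate and where an unstated hypothesis on $\mathds{K}$ (typically algebraic closure) is likely required for the conclusion to hold verbatim.
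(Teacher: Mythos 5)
Your argument for the first assertion is exactly the paper's: the authors prove nothing beyond the remark preceding the proposition, namely that the kernel of an invariant form is a hom-ideal, and they let simplicity do the rest. Where you genuinely diverge is the second assertion. The paper offers no argument whatsoever for proportionality --- it is presented as if it also followed ``hence'' from the kernel remark, which it does not --- whereas you supply the standard Schur-type proof: represent $f_{2}$ against $f_{1}$ by an operator $T$, show $T$ intertwines the adjoint action, and conclude $T=\lambda\,\mathrm{id}$ from simplicity. This is the right way to fill the gap, and you are also right to flag the hypotheses the paper silently needs: finite-dimensionality (or a graded substitute) to get $T$ at all, and algebraic closure of $\mathds{K}$ (or a descent argument) to produce an eigenvalue. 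One further subtlety you gloss over, which in fact also afflicts the paper's kernel remark: in this paper a hom-ideal $I$ must satisfy $\alpha(I)\subseteq I$, and neither the kernel of $f$ nor $\ker(T-\lambda)$ is obviously $\alpha$-stable for a form that is merely invariant rather than $\alpha$-invariant; to make $T$ commute with $\alpha$ one needs $f(\alpha(x),y)=f(x,\alpha(y))$ for both forms. So either the proposition should be read with ``$\alpha$-invariant'' throughout, or one must use a notion of simplicity referring to bracket-ideals only. Your proof is therefore more honest and more complete than the paper's, at the cost of hypotheses the paper never states.
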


\begin{proposition}
Let $(.,.)$ be a consistent supersymmetric $\alpha$-invariant bilinear form on the local part of a $\mathds{Z}$-graded hom-Lie superalgebra $\mathfrak{g}= \bigoplus \mathfrak{g}_i$ for which $(\mathfrak{g}_i,\mathfrak{g}_j)=0$, where $i+j \ne 0$. If $\mathfrak{g}_{-1} \oplus \mathfrak{g}_{0} \oplus \mathfrak{g}_{1}$ generates $\mathfrak{g}$, then the form can be extended uniquely to a consistent supersymmetric $\alpha$-invariant bilinear form with the same property on the whole $\mathfrak{g}$.
\end{proposition}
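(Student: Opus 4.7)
The plan is to build the extension by induction on the absolute value of the grading. The condition $(\mathfrak{g}_i,\mathfrak{g}_j)=0$ for $i+j\ne 0$ reduces the problem to specifying, for each integer $n\geq 0$, a bilinear pairing $f_n\colon \mathfrak{g}_n\times \mathfrak{g}_{-n}\to\mathds{K}$; the remaining components are forced to vanish by the grading condition, and these zero components automatically respect consistency, supersymmetry and $\alpha$-invariance. Thus only the nondegenerate diagonal pieces need to be defined and checked.

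The base of the induction is $|n|\leq 1$, where $f_n$ is prescribed as the restriction of $(.,.)$ to the local part. For the inductive step, I use the hypothesis that $\mathfrak{g}_{-1}\oplus\mathfrak{g}_0\oplus\mathfrak{g}_1$ generates $\mathfrak{g}$, so that for $n\geq 2$
\[
\mathfrak{g}_{n}=[\mathfrak{g}_1,\mathfrak{g}_{n-1}],\qquad \mathfrak{g}_{-n}=[\mathfrak{g}_{-1},\mathfrak{g}_{-(n+1)}].
\]
Given $u=[a,b]$ with $a\in\mathfrak{g}_1$, $b\in\mathfrak{g}_{n-1}$, and given $z\in\mathfrak{g}_{-n}$, I would \emph{define} the extension by forcing the desired invariance, namely
\[
f(u,z):=f(a,[b,z]),
\]
which makes sense because $[b,z]\in\mathfrak{g}_{-1}$ and $a\in\mathfrak{g}_1$, so the right-hand side is provided either by the given local form or by the previous stage of the induction. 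A symmetric recipe defines $f$ on $\mathfrak{g}_{-n}\times\mathfrak{g}_n$.

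The hard step is well-definedness: if $\sum_i[a_i,b_i]=0$, one has to show $\sum_i f(a_i,[b_i,z])=0$ for every $z\in\mathfrak{g}_{-n}$. My approach is to regard the trilinear form $(a,b,z)\mapsto f(a,[b,z])$ on $\mathfrak{g}_1\times\mathfrak{g}_{n-1}\times\mathfrak{g}_{-n}$ and show that it descends through the bracket $\mathfrak{g}_1\otimes\mathfrak{g}_{n-1}\to\mathfrak{g}_n$. This reduces, via the construction of $\mathfrak{g}$ as a quotient of $\mathfrak{g}_{\max}(\hat{\mathfrak{g}})$ from Theorem \ref{Exist}, to verifying that the recipe is compatible with (i) anticommutativity of the bracket (which uses supersymmetry of the already-defined $f$), and (ii) the hom-Jacobi identity: the latter produces the identity
\[
f\bigl(\alpha(a),[[b,c],z]\bigr)=\pm f\bigl(\alpha(b),[[c,a],z]\bigr)\pm f\bigl(\alpha(c),[[a,b],z]\bigr)
\]
after one unwinds the definition, and this follows from the hom-Jacobi identity applied to $[a,[b,c]]$ inside the pairing, combined with $\alpha$-invariance on the lower-degree pieces. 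This is the main obstacle, and it is where multiplicativity of $\alpha$ and the $\alpha$-invariance of the local form are essential.

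Once well-definedness is established, I would check the three structural properties simultaneously by induction. Supersymmetry: $f([a,b],z)=f(a,[b,z])=-(-1)^{|a||[b,z]|}f([b,z],a)$ by the local supersymmetry, and one massages this with anticommutativity of the bracket and the inductive hypothesis on lower levels to recover $-(-1)^{|u||z|}f(z,u)$; this in particular gives compatibility of the two independent definitions of $f$ on $\mathfrak{g}_n\times\mathfrak{g}_{-n}$ and $\mathfrak{g}_{-n}\times\mathfrak{g}_n$. Invariance $f([x,y],w)=f(x,[y,w])$ for arbitrary homogeneous $x,y,w$ is checked by reducing to the case where one of the three factors lies in $\mathfrak{g}_{\pm 1}\oplus\mathfrak{g}_0$ (using the generation hypothesis) and then invoking the defining recipe. $\alpha$-invariance follows from the multiplicativity identity $\alpha([a,b])=[\alpha(a),\alpha(b)]$: one computes
\[
f(\alpha([a,b]),z)=f([\alpha(a),\alpha(b)],z)=f(\alpha(a),[\alpha(b),z])=f(\alpha(a),\alpha([b,z]))
\]
and applies the inductive hypothesis on lower-degree pieces, which gives $f(a,\alpha^2([b,z]))=f([a,b],\alpha(z))$. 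Uniqueness is automatic from the recipe: any extension satisfying invariance is forced to take these values on brackets, and brackets generate. The induction therefore closes and yields the extension with all required properties.
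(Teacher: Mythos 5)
Your proposal follows essentially the same route as the paper: both extend the form degree by degree, by induction on $|n|$, defining it on $\mathfrak{g}_n\times\mathfrak{g}_{-n}$ by forcing invariance on brackets built from the local part, and both reduce well-definedness to a computation with the hom-Jacobi identity combined with supersymmetry and $\alpha$-invariance at lower degrees. The paper carries out that hom-Jacobi computation explicitly, arriving at the compatibility identity $([[x_{k-s},x_s],\alpha(y_{k-s})],y_s)=(x_s,[[\alpha(y_{k-s}),y_s],x_{k-s}])$, which is precisely the consistency check you describe.
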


\begin{proof}
	Let $(\mathfrak{g}_i,\mathfrak{g}_j)=0$, where $i+j \ne 0$. We prove by induction on $k$ such that for $x \in \mathfrak{g}_i$, $y \in \mathfrak{g}_{-i}$, $|i| \le k$, we can define $(x,y)$ so that the bilinear form $(.,.)$ be $\alpha$-invariant so long as all the elements in this equation lie in the space $\bigoplus_{i=-k}^{k} \mathfrak{g}_i$. When $k=1$, the assertion holds by hypothesis, since $(.,.)$ is defined on $\mathfrak{g}_{-1} \oplus \mathfrak{g}_{0} \oplus \mathfrak{g}_{1}$. For $ i>0$, let $x_i$ and $y_i$ be elements in $\mathfrak{g}_i$ and $\mathfrak{g}_{-i}$ respectively. Suppose that
	\begin{equation*}
		A=(-1)^{|s-k||s|}(-1)^{|s-k||k|}([[x_{k-s},x_s],\alpha(y_{k-s})],y_s).
	\end{equation*}
	By the induction hypothesis we have the following equality ($0<s<k$):
		\begin{flalign*}
		A&=(-1)^{|s-k||s|}(-1)^{|s-k||k|}([[x_{k-s},x_s],\alpha(y_{k-s})],y_s)\\
        &=(-1)^{|k-s||s-k|}([\alpha(x_{k-s}),[x_s,y_{k-s}]],y_s) +(-1)^{|k-s||s|}([\alpha(x_s),[y_{k-s},x_{k-s}]],y_s)&\\
		&= (-1)^{|k-s||s-k|}([\alpha(x_{k-s}),[x_s,y_{k-s}]],y_s)
        - (-1)^{|k-s||s|}([[y_{k-s},x_{k-s}],\alpha(x_s)],y_s)&\\
		&= -(-1)^{|k-s||s-k|}(-1)^{|k-s||2s-k|} ([[x_s,y_{k-s}],\alpha(x_{k-s})],y_s)\\
	    & \quad   +(-1)^{|k-s||s|}(-1)^{|k-s||s-k|} ([[x_{k-s},y_{k-s}],\alpha(x_s)],y_s)&\\
		&=  -(-1)^{|k-s||s-k|}(-1)^{|k-s||2s-k|}  ([x_s,y_{k-s}],[\alpha(x_{k-s}),y_s])\\
        & \quad + (-1)^{|k-s||s|}(-1)^{|k-s||k-s|} ([x_{k-s},y_{k-s}],[\alpha(x_s),y_s])&\\
		&=  (-1)^{|k-s||s-k|}(-1)^{|k-s||2s-k|}(-1)^{|k-s||s|} ([x_s,y_{k-s}],[y_s,\alpha(x_{k-s})])\\
        & \quad + (-1)^{|k-s||s|}(-1)^{|k-s||k-s|} ([x_{k-s},y_{k-s}],[x_s,\alpha(y_s)])&\\
		&=  (-1)^{|k-s||s-k|}(-1)^{|k-s||2s-k|}(-1)^{|k-s||s|} ([x_s,y_{k-s}],[y_s,\alpha(x_{k-s})])\\
        & \quad + (-1)^{|k-s||s|}(-1)^{|k-s||k-s|}  ([x_{k-s},y_{k-s}],[x_s,\alpha(y_s)])&\\
		&= (-1)^{|k-s||s-k|}(-1)^{|k-s||2s-k|}(-1)^{|k-s||s|} (x_s,[y_{k-s},[y_s,\alpha(x_{k-s})]])\\
        & \quad - (-1)^{|k-s||s|}(-1)^{|k-s||k-s|} ([x_s,\alpha(y_s)],[x_{k-s},y_{k-s}])&\\
		&= (-1)^{|k-s||s-k|}(-1)^{|k-s||2s-k|}(-1)^{|k-s||s|} (x_s,[y_{k-s},[y_s,\alpha(x_{k-s})]])\\
        & \quad - (-1)^{|k-s||s|}(-1)^{|k-s||k-s|} (x_s,[\alpha(y_s),[x_{k-s},y_{k-s}]]),&
	\end{flalign*}
therefore we have
	\begin{flalign*}
		(-1)^{|s-k||k|}([[x_{k-s},x_s],& \alpha(y_{k-s})],y_s)\\
		&= (-1)^{|k-s||s-k|}(-1)^{|k-s||2s-k|} (x_s,[y_{k-s},[y_s,\alpha(x_{k-s})]])&\\
		&\quad - (-1)^{|k-s||k-s|} (x_s,[\alpha(y_s),[x_{k-s},y_{k-s}]],&
	\end{flalign*}
finally
	\begin{align*}
	([[x_{k-s},x_s],\alpha(y_{k-s})],y_s) =(x_s,[[\alpha(y_{k-s}),y_s],x_{k-s}]).
	\end{align*}
If we set
	\begin{align*}
		(\alpha[x_{k-s},x_s],[y_{k-s},y_s])
		= ([\alpha[x_{k-s},x_s],y_{k-s}],y_s)
		= ([[x_{k-s},x_s],\alpha(y_{k-s})],y_s),
	\end{align*}
then this definition will be well-defined and the form will satisfy the induction hypothesis.
\end{proof}

Now, we state some conditions for the simplicity of hom-Lie superalgebras.

\begin{proposition}
	Let $\mathfrak{g}=\mathfrak{g}_{\bar{0}} \oplus \mathfrak{g}_{\bar{1}}$ be a hom-Lie superalgebra which satisfies the following conditions:
	\begin{enumerate}
		\item [i.] The representation of $\mathfrak{g}_{\bar{0}}$ on $\mathfrak{g}_{\bar{1}}$ is faithfull and irreducible
		\item [ii.] $[\mathfrak{g}_{\bar{1}},\mathfrak{g}_{\bar{1}}]=\mathfrak{g}_{\bar{0}}$
	\end{enumerate}
	then the hom-Lie superalgebra $\mathfrak{g}$ is simple, provided that $\mathfrak{g}_{\bar{0}} \ne \{0\}$.
\end{proposition}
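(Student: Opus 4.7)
The plan is to verify both clauses of the notion of simple: that $[\mathfrak{g},\mathfrak{g}]\neq\{0\}$ and that $\mathfrak{g}$ admits no nontrivial $\mathds{Z}_{2}$-graded hom-ideal. The first is immediate from hypothesis (ii), since $[\mathfrak{g},\mathfrak{g}]\supseteq[\mathfrak{g}_{\bar 1},\mathfrak{g}_{\bar 1}]=\mathfrak{g}_{\bar 0}\neq\{0\}$ by assumption; so the real work is in the second clause.

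For that, let $I$ be a nonzero $\mathds{Z}_{2}$-graded hom-ideal, and decompose it as $I=I_{\bar 0}\oplus I_{\bar 1}$ with $I_{\bar s}=I\cap\mathfrak{g}_{\bar s}$. I would case-split on whether $I_{\bar 1}$ vanishes. If $I_{\bar 1}\neq\{0\}$, then $[\mathfrak{g}_{\bar 0},I_{\bar 1}]\subseteq I\cap\mathfrak{g}_{\bar 1}=I_{\bar 1}$ exhibits $I_{\bar 1}$ as a nonzero $\mathfrak{g}_{\bar 0}$-subrepresentation of $\mathfrak{g}_{\bar 1}$, so irreducibility in (i) forces $I_{\bar 1}=\mathfrak{g}_{\bar 1}$. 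Then hypothesis (ii) yields
\begin{equation*}
I_{\bar 0}\supseteq[I_{\bar 1},I_{\bar 1}]=[\mathfrak{g}_{\bar 1},\mathfrak{g}_{\bar 1}]=\mathfrak{g}_{\bar 0},
\end{equation*}
whence $I=\mathfrak{g}$, as desired.

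In the complementary case $I_{\bar 1}=\{0\}$, the ideal sits inside $\mathfrak{g}_{\bar 0}$, and the hom-ideal property gives $[I,\mathfrak{g}_{\bar 1}]\subseteq I\cap\mathfrak{g}_{\bar 1}=\{0\}$. Thus every element of $I$ acts trivially on $\mathfrak{g}_{\bar 1}$ by bracket, and the faithfulness part of (i) forces $I=\{0\}$, contradicting the choice of $I$.

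The only real subtlety I anticipate is pinning down in what sense the representation of $\mathfrak{g}_{\bar 0}$ on $\mathfrak{g}_{\bar 1}$ in hypothesis (i) is meant: in the hom setting one has both the plain adjoint and the twisted adjoints $\mathrm{ad}_{s}$ from Example~1.12, and the meaning of \emph{faithful} and \emph{irreducible} depends on this choice. However, both halves of my argument use only the bracket-level assertions that $[\mathfrak{g}_{\bar 0},\mathfrak{g}_{\bar 1}]\subseteq\mathfrak{g}_{\bar 1}$, that every $\mathfrak{g}_{\bar 0}$-stable subspace of $\mathfrak{g}_{\bar 1}$ is $\{0\}$ or $\mathfrak{g}_{\bar 1}$, and that $[x,\mathfrak{g}_{\bar 1}]=0$ with $x\in\mathfrak{g}_{\bar 0}$ implies $x=0$; so I would fix this reading of (i) at the outset, after which the proof is essentially the classical Lie superalgebra argument with no interference from $\alpha$.
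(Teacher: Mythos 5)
Your proof is correct. There is, however, nothing in the paper to compare it against: the authors' entire proof of this proposition is the sentence ``The proof is straightforward,'' so your write-up supplies exactly the argument they are implicitly invoking. It is the standard Kac-style case split on a nonzero graded hom-ideal $I=I_{\bar 0}\oplus I_{\bar 1}$: if $I_{\bar 1}\neq\{0\}$, irreducibility forces $I_{\bar 1}=\mathfrak{g}_{\bar 1}$ and then (ii) drags in all of $\mathfrak{g}_{\bar 0}$; if $I_{\bar 1}=\{0\}$, then $[I,\mathfrak{g}_{\bar 1}]=0$ and faithfulness kills $I$. Two small points worth making explicit. First, for $I_{\bar 1}$ to be a subrepresentation in the hom sense one also needs $\alpha(I_{\bar 1})\subseteq I_{\bar 1}$; this holds because $I$ is a hom-ideal (so $\alpha(I)\subseteq I$) and $\alpha$ is even, and it costs you nothing since your argument only ever applies irreducibility to the $\alpha$-stable subspace $I_{\bar 1}$. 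Second, the ambiguity you flag about which adjoint action hypothesis (i) refers to is settled by the paper's own convention (``We consider $\mathfrak{g}$ as a representation on itself via the bracket''), i.e.\ the untwisted action $a\cdot x=[a,x]$, which is precisely the reading your two bracket-level uses of (i) require; had the intended action been $\mathrm{ad}_s$ with $s>0$, the faithfulness step would additionally need injectivity of $\alpha$, so fixing this reading at the outset, as you propose, is the right move.
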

\begin{proof}
  The proof is straightforward.
\end{proof}

\begin{proposition}
	Let $\mathfrak{g}= \bigoplus_{i \ge -1} \mathfrak{g}_{i}$ be a $\mathds{Z}$-graded hom-Lie superalgebra such that $\mathfrak{g}_{-1} \ne \{0\}$.
	\begin{enumerate}
		\item [i.] If hom-Lie superalgebra $\mathfrak{g}$ is simple, then:
		\begin{enumerate}
			\item [1.] $\mathfrak{g}$ is transitive and irreducible.
			\item [2.] $[\mathfrak{g}_{-1},\mathfrak{g}_{1}]=\mathfrak{g}_{0}$.
			\item [3.] $\mathfrak{g}_{0} \ne \{0\}$, $\mathfrak{g}_{1} \ne \{0\}$.
		\end{enumerate}
		\item[ii.] Suppose that $\mathfrak{g}_{1} \ne \{0\}$ and $\mathfrak{g}$ be transitive and irreducible such that
		\begin{equation*}
		[\mathfrak{g}_{n},\mathfrak{g}_{1}]=\mathfrak{g}_{n+1} \quad n \ge -1,
		\end{equation*}
		then $\mathfrak{g}$ is simple.
	\end{enumerate}
\end{proposition}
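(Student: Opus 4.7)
The plan is to handle parts (i) and (ii) separately. For (i), assume $\mathfrak{g}$ is simple and derive the four consequences in the order \emph{transitivity}, \emph{nontriviality of $\mathfrak{g}_0$ and $\mathfrak{g}_1$}, \emph{irreducibility}, and \emph{the relation $[\mathfrak{g}_{-1},\mathfrak{g}_1]=\mathfrak{g}_0$}, each by exhibiting a graded hom-ideal and invoking simplicity. For transitivity, given $a\in\mathfrak{g}_k$ with $k\ge 0$ and $[a,\mathfrak{g}_{-1}]=0$, I would form
\begin{equation*}
I \;=\; \bigoplus_{m,l \ge 0}(ad_s(\mathfrak{g}_1))^m (ad_s(\mathfrak{g}_0))^l(a),
\end{equation*}
following the strategy of Proposition \ref{prop1}: iterated use of hom-Jacobi lets one push $ad_s(\mathfrak{g}_{-1})$ through the outer operators until it meets $a$ and vanishes, showing $I$ is a graded hom-ideal concentrated in degrees $\ge k$; simplicity forces $I=0$ and $a=0$. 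For nontriviality, if $\mathfrak{g}_1=0$ then iterating transitivity via $[\mathfrak{g}_i,\mathfrak{g}_{-1}]\subseteq\mathfrak{g}_{i-1}$ gives $\mathfrak{g}_i=0$ for every $i\ge 1$, so $\mathfrak{g}=\mathfrak{g}_{-1}\oplus\mathfrak{g}_0$; then either $\mathfrak{g}_0\ne 0$ (making $\mathfrak{g}_{-1}$ a proper graded hom-ideal) or $\mathfrak{g}_0=0$ (making $\mathfrak{g}=\mathfrak{g}_{-1}$ abelian with $[\mathfrak{g},\mathfrak{g}]=0$), each contradicting simplicity. Symmetrically $\mathfrak{g}_0=0$ reduces to $\mathfrak{g}_1=0$ via $[\mathfrak{g}_{-1},\mathfrak{g}_1]\subseteq\mathfrak{g}_0=0$ and transitivity.

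For irreducibility, suppose $0\ne W\subsetneq\mathfrak{g}_{-1}$ is a $\mathfrak{g}_0$-submodule (hence $\alpha$-stable), and let $J$ be the graded hom-ideal of $\mathfrak{g}$ generated by $W$. I would prove by induction on bracket depth that $J\cap\mathfrak{g}_{-1}=W$; the decisive reduction is
\begin{equation*}
[\mathfrak{g}_{-1},[\mathfrak{g}_1,W]] \subseteq [[\mathfrak{g}_{-1},\mathfrak{g}_1],W]+[\mathfrak{g}_1,[\mathfrak{g}_{-1},W]] \subseteq [\mathfrak{g}_0,W]+0 \subseteq W,
\end{equation*}
using $[\mathfrak{g}_{-1},W]\subseteq\mathfrak{g}_{-2}=0$ and invariance. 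Then $J$ is a proper nonzero graded hom-ideal, contradicting simplicity. For $[\mathfrak{g}_{-1},\mathfrak{g}_1]=\mathfrak{g}_0$, set $N:=[\mathfrak{g}_{-1},\mathfrak{g}_1]$; a Jacobi computation gives $[\mathfrak{g}_0,N]\subseteq N$, and then $\mathfrak{g}':=\mathfrak{g}_{-1}+N+\sum_{i\ge 1}\mathfrak{g}_i$ is $\alpha$-stable and closed under bracket with every $\mathfrak{g}_i$, hence a graded hom-ideal. Since $\mathfrak{g}_{-1}\subseteq\mathfrak{g}'\ne 0$, simplicity forces $\mathfrak{g}'=\mathfrak{g}$, i.e.\ $N=\mathfrak{g}_0$.

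For part (ii), let $J$ be a nonzero graded hom-ideal. Then $J\cap\mathfrak{g}_{-1}$ is a $\mathfrak{g}_0$-submodule, so irreducibility forces $J\cap\mathfrak{g}_{-1}\in\{0,\mathfrak{g}_{-1}\}$. If it equals $0$, take the least $k\ge 0$ with $J\cap\mathfrak{g}_k\ne 0$ and $0\ne a\in J\cap\mathfrak{g}_k$; then $[a,\mathfrak{g}_{-1}]\subseteq J\cap\mathfrak{g}_{k-1}=0$, contradicting transitivity. Hence $\mathfrak{g}_{-1}\subseteq J$, and the hypothesis $[\mathfrak{g}_n,\mathfrak{g}_1]=\mathfrak{g}_{n+1}$ applied inductively for $n=-1,0,1,\ldots$ gives $\mathfrak{g}_n\subseteq J$ for every $n\ge 0$, so $J=\mathfrak{g}$. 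Finally $[\mathfrak{g},\mathfrak{g}]\ne 0$ because $\mathfrak{g}_1\ne 0$ together with transitivity forces $[\mathfrak{g}_{-1},\mathfrak{g}_1]\ne 0$.

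The main technical obstacle throughout is bookkeeping the $\alpha$-twists in the hom-Jacobi identity while verifying that each candidate subspace is a genuine hom-ideal, that is, $\alpha$-stable and closed under brackets with \emph{all} of $\mathfrak{g}$, not merely with the local part $\mathfrak{g}_{-1}\oplus\mathfrak{g}_0\oplus\mathfrak{g}_1$. The argument that $\mathfrak{g}'$ in (i) is $\alpha$-stable tacitly uses multiplicativity of $\alpha$, a hypothesis not explicitly stated in the proposition; it should either be added or circumvented by replacing $N$ with the smallest $\alpha$-stable subspace of $\mathfrak{g}_0$ containing $[\mathfrak{g}_{-1},\mathfrak{g}_1]$.
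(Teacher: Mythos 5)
Your proposal follows essentially the same route as the paper for part (i): exhibit a graded hom-ideal and invoke simplicity. One caution on your transitivity step: the ideal $\bigoplus_{m,l}(ad_s(\mathfrak{g}_1))^m(ad_s(\mathfrak{g}_0))^l(a)$ is only closed under bracketing with all of $\mathfrak{g}$ if $\mathfrak{g}$ is generated by its local part, a hypothesis the proposition does not state (the paper's blanket generation assumption is declared only for Section 2). The paper avoids this by setting $V=\{A\in\bigoplus_{n\ge0}\mathfrak{g}_n \mid [A,\mathfrak{g}_{-1}]=0\}$ and saturating under $\mathfrak{g}^{+}=\bigoplus_{n\ge1}\mathfrak{g}_n$ rather than under $\mathfrak{g}_1$ alone; you should do the same. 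Your irreducibility argument and your ideal $\mathfrak{g}_{-1}+[\mathfrak{g}_{-1},\mathfrak{g}_1]+\sum_{i\ge1}\mathfrak{g}_i$ coincide with the paper's (there: $\hat{V}\subseteq V\oplus\bigoplus_{n\ge0}\mathfrak{g}_n$, and $\mathfrak{g}_{-1}\oplus[\mathfrak{g}_{-1},\mathfrak{g}_1]\oplus\mathfrak{g}^{+}$). Where you go beyond the paper is items 3 and part (ii): the paper does not prove the nontriviality of $\mathfrak{g}_0$ and $\mathfrak{g}_1$ at all and dismisses (ii) as obvious, whereas you supply complete and correct arguments (the induction $\mathfrak{g}_1=0\Rightarrow\mathfrak{g}_i=0$ for $i\ge1$ via transitivity, and for (ii) the minimal-degree argument on a nonzero graded hom-ideal combined with $[\mathfrak{g}_n,\mathfrak{g}_1]=\mathfrak{g}_{n+1}$). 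Your closing caveat about tracking the $\alpha$-twists in the hom-Jacobi identity is apt, but the paper is no more careful on that point than you are.
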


\begin{proof}
	Let $V$ be a $\mathds{Z}_2$-graded subspace of $\mathfrak{g}$ such that:
	\begin{equation*}
	[\mathfrak{g}_{-1},V] \subset V, \qquad [\mathfrak{g}_{0},V] \subset V.
	\end{equation*}
	We set $\mathfrak{g}^{+}=\bigoplus_{n \ge 1} \mathfrak{g}_n$ and define for all positive integer $n \ge 0$:
	\begin{equation*}
	V^n=[\mathfrak{g}^{+},[\mathfrak{g}^{+},...,[\mathfrak{g}^{+},V]]], \qquad \text{n factors} \quad \mathfrak{g}^{+}
	\end{equation*}
	then we can see that
	\begin{equation*}
		\hat{V} = \Sigma_{n \ge 0} V^{n}
	\end{equation*}
	is graded hom-ideal of the hom-Lie superalgebra $\mathfrak{g}$.
	Suppose that $\mathfrak{g}$ is simple. If we set
	\begin{equation*}
	V= \{A \in \bigoplus_{n \ge 0} \mathfrak{g}_n \quad | [A,\mathfrak{g}_{-1}]=\{0 \}\}
	\end{equation*}
	then hom-ideal $\hat{V}$ is contained in $\bigoplus_{n \ge 0} \mathfrak{g}_n$.
	Since $\mathfrak{g}$ is simple, $\hat{V} = \{0\}$, so $V= \{0\}$ and therefore $\mathfrak{g}$ is transitive.\\
	Let $V$ be a nonzero graded $\mathfrak{g}_0$-submodule of $\mathfrak{g}_{-1}$. Then $\hat{V}$ is nonzero and is contained in $V \oplus \bigoplus_{n \ge 0} \mathfrak{g}_n$.
	Since $\mathfrak{g}$ is simple, implies that $\hat{V} = \mathfrak{g}$, thus $V=\mathfrak{g}_{-1}$ and so $\mathfrak{g}$ is irreducible.\\
	On the other hand, since $\mathfrak{g}_{-1} \oplus [\mathfrak{g}_{-1} , \mathfrak{g}_1 ] \oplus \mathfrak{g}^{+}$ is a graded hom-ideal of $\mathfrak{g}$ and $\mathfrak{g}$ is simple, so $[\mathfrak{g}_{-1} , \mathfrak{g}_1] = \mathfrak{g}_0$.\\
The last statement is obvious.
\end{proof}
Thus, in this paper we showed that there is a maximal (resp., minimal) $\mathds{Z}$-graded hom-Lie superalgebra for a given local hom-Lie superalgebra. Morever, we proved that a consistent supersymmetric $\alpha$-invariant form on the local part can be extended uniquely to a bilinear form with the same property on the whole $\mathds{Z}$-graded hom-Lie superalgebra. Furthermore, we found out the condition in which the $\mathds{Z}$-graded hom-Lie superalgebra is simple.
\section*{Acknowledgment}
The authors would like to thank Shiraz University, for financial support, which leads to the formation of this manuscript. This research is supported by grant no. 97grc1m82582 Shiraz University, Shiraz, Iran.
\section*{Competing interests}
The authors declare that they have no competing interests.

{\small
}


\begin{thebibliography}{999}

\bibitem{aizawa1991q}
   {\it N. Aizawa, H. Sato}:
   q-deformation of the Virasoro algebra with central extension.
   Phys Lett B. {\bf 256} {\bf 2} (1991), 185--190. Zbl 1332.17011,  DOI 10.1016/0370-2693(91)90671-C.

\bibitem{ammar2010cohomology}
   {\it F. Ammar, Z. Ejbehi, A. Makhlouf}:
   Cohomology and deformations of Hom-algebras.
   J Lie Theory. {\bf 21} {\bf 4} (2010), 813--836. Zbl 1237.17003,

\bibitem{ammar2010hom}
   {\it F. Ammar, A. Makhlouf}:
   Hom-Lie superalgebras and Hom-Lie admissible superalgebras.
   J Algebra. {\bf 324} {\bf 7} (2010), 1513--1528. Zbl 1258.17008, DOI 10.1016/j.jalgebra.2010.06.014.

\bibitem{ammar2013cohomology}
   {\it F. Ammar, A. Makhlouf, N. Saadaoui}:
   Cohomology of Hom-Lie superalgebras and q-deformed Witt superalgebra.
   Czech Math J. {\bf 63} {\bf 3} (2013), 721--761. Zbl 1299.17018, DOI 10.1007/s10587-013-0049-6.

\bibitem{armakan2017extensions}
   {\it A. Armakan, M. R. Farhangdoost}:
   Extensions of hom-Lie algebras in terms of cohomology.
   Czech Math J. {\bf 67} {\bf 2} (2017), 317--328. Zbl 06738521, DOI 10.21136/CMJ.2017.0576-15.

\bibitem{armakan2017geometric}
   {\it A. Armakan, M. R. Farhangdoost}:
   Geometric aspects of extensions of hom-Lie superalgebras.
   Int J Geom Methods M. {\bf 14} {\bf 06} (2017), 1750085. Zbl 1365.17008,  DOI 10.1142/S0219887817500852.

\bibitem{armakan2019enveloping}
   {\it A. Armakan, S. Silvestrov, M. R. Farhangdoost}:
   Enveloping algebras of color hom-Lie algebras.
   Turk J Math. {\bf 43} {\bf 1} (2019), 316--339. Zbl 07052291, DOI 10.3906/mat-1808-96.

\bibitem{armakan2019extensions}
   {\it A. Armakan, S. Silvestrov, M. R. Farhangdoost}:
   Extensions of Hom-Lie Color Algebras.
   Georgian Math J. (2019). DOI 10.1515/gmj-2019-2033.

\bibitem{benayadi2014hom}
   {\it S. Benayadi, A. Makhlouf}:
   Hom-Lie algebras with symmetric invariant nondegenerate bilinear forms.
   J Geom Phys. {\bf 76} (2014), 38--60. Zbl 1331.17028, DOI 10.1016/j.geomphys.2013.10.010.

\bibitem{chaichian1990quantum}
   {\it M. Chaichian, D. Ellinas, Z. Popowicz}:
   Quantum conformal algebra with central extension.
   Phys Lett B. {\bf 248} (1990), 95--99. DOI 10.1016/0370-2693(90)90021-W.

\bibitem{chaichian1991q}
   {\it M. Chaichian, A. Isaev, J. Lukierski, Z. Popowicz, P. Pre{\^s}najder}:
   q-deformations of Virasoro algebra and conformal dimensions.
   Phys Lett B. {\bf 262} {\bf 1} (1991), 32--38. DOI 10.1016/0370-2693(91)90638-7.

\bibitem{daskaloyannis1992generalized}
   {\it C. Daskaloyannis}:
   Generalized deformed Virasoro algebras.
   Mod Phys Lett A. {\bf 7} {\bf 09} (1992), 809--816. DOI 10.1142/S0217732392000793.

\bibitem{frolicher1957theorem}
   {\it A. Fr{\"o}licher, A. Nijenhuis}:
   A theorem on stability of complex structures.
   Proc Natl Acad Sci U S A. {\bf 43} {\bf 2} (1957), 239--241. DOI 10.1073/pnas.43.2.239.

\bibitem{guan2019on}
   {\it B. Guan, L. Chen, B. Sun}:
   On Hom-Lie Superalgebras.
   Adv Appl Clifford Al. {\bf 29} {\bf 1} (2019). Zbl 07027750, DOI 10.1007/s00006-018-0932-1.

\bibitem{hartwig2006deformations}
   {\it J. T. Hartwig, D. Larsson, S. Silvestrov}:
   Deformations of Lie algebras using $\sigma$-derivations.
   J Algebra. {\bf 295} {\bf 2} (2006), 314--361. Zbl 1138.17012, DOI 10.1016/j.jalgebra.2005.07.036.

\bibitem{kac1977lie}
   {\it V. Kac}:
   Lie superalgebras.
   Adv Math. {\bf 26} {\bf 1} (1977), 8--96. DOI 10.1016/0001-8708(77)90017-2.

\bibitem{kac1968simple}
   {\it V. Kac}:
   Simple irreducible graded Lie algebras of finite growth.
   Math USSR Izv. {\bf 2} {\bf 6} (1968), 1271--1311. DOI 10.1070/IM1968v002n06ABEH000729.

\bibitem{larsson2005graded}
   {\it D. Larsson, S. Silvestrov}:
   Graded quasi-Lie algebras.
   Czech J Phys. {\bf 55} {\bf 11} (2005), 1473--1478. DOI 10.1007/s10582-006-0028-3.

\bibitem{larsson2005quasi-hom}
   {\it D. Larsson, S. Silvestrov}:
   Quasi-hom-Lie algebras, central extensions and 2-cocycle-like identities.
   J Algebra. {\bf 288} {\bf 2} (2005), 321--344. Zbl 1099.17015, DOI 10.1016/j.jalgebra.2005.02.032.

\bibitem{larsson2005quasi}
   {\it D. Larsson, S. Silvestrov}:
   Quasi-Lie algebras.
   In: Fuchs, editor. Noncommutative Geometry and Representation Theory in Mathematical Physics Contemp Math 391.    Providence, RI, USA: American Mathematical Society, (2005), 241--248.

\bibitem{larsson2007quasi}
   {\it D. Larsson, S. Silvestrov}:
   Quasi-Deformations of $sl_2(\mathds{F})$ Using Twisted Derivations.
   Commun Algebra. {\bf 35} {\bf 12} (2007), 4303--4318. DOI 10.1080/00927870701545127.

\bibitem{larsson2009generalized}
   {\it D. Larsson, S. Silvestrov}:
   On generalized N-complexes coming from twisted derivations.
   Generalized Lie Theory in Mathematics, Physics and Beyond. (2009), 81--88. Zbl 1162.81024.

\bibitem{liu1992characterizations}
   {\it K. Liu}:
   Characterizations of the quantum Witt algebra.
   Lett Math Phys. {\bf 24} {\bf 4} (1992), 257--265. Zbl 0759.17007, DOI 10.1007/BF00420485.

\bibitem{liu2013hom}
   {\it Y. Liu, L. Chen, Y, Ma}:
   Hom-Nijienhuis operators and $T^{*}$-extensions of hom-Lie superalgebras.
   Linear Algebra Appl. {\bf 439} {\bf 7} (2013), 2131--2144. Zbl 1281.17033, DOI 10.1016/j.laa.2013.06.006.

\bibitem{makhlouf2010paradigm}
   {\it A. Makhlouf}:
   Paradigm of Nonassociative Hom-algebras and Hom-superalgebras.
   Proceedings of Jordan structures in algebra and analysis meeting: tribute to El Amin Kaidi for his 60th birthday, Almer{\'\i}a, 2009 (J. Carmona Tapia et al., ed.). Univ. de Almería, Departamento de Álgebra y Análisis Matemático, Almería, (2010), 143--177. Zbl 1252.17001.

\bibitem{sheng2012representations}
   {\it Y. Sheng}:
   Representations of hom-Lie algebras.
   Algebr Represent Theor. {\bf 15} {\bf 6} (2012), 1081--1098. DOI 10.1007/s10468-011-9280-8.

\bibitem{sigurdsson2006graded}
   {\it G. Sigurdsson, S. Silvestrov}:
   Graded quasi-Lie algebras of Witt type.
   Czech J Phys. {\bf 56} (2006), 1287--1291. Zbl 1109.17014, DOI 10.1007/s10582-006-0439-1.

\bibitem{yau2009hom}
   {\it D. Yau}:
   Hom-Algebras and Homology.
   J Lie Theory. {\bf 19} {\bf 2} (2009), 409--421.

\bibitem{zhou2014general}
   {\it J. Zhou, L. Chen, Y. Ma}:
   Generalized derivations of hom-Lie superalgebras.
   Acta Math Sinica (Chin Ser). {\bf 58} (2014), 3737--3751.

\end{thebibliography}
\end{document}